\newcommand{\mylabel}[2]{(#2)\def\@currentlabel{#2}\label{#1}}
\theoremstyle{definition}
\newtheorem{theorem}{Theorem}[section]
\newtheorem{definition}[theorem]{Definition}
\newtheorem{lemma}[theorem]{Lemma}
\newtheorem{corollary}[theorem]{Corollary}
\newtheorem{proposition}[theorem]{Proposition}
\theoremstyle{remark}
\newtheorem{remark}[theorem]{Remark}
\newtheorem{example}[theorem]{Example}
\newcounter{enumalpha}\renewcommand{\theenumalpha}{\alph{enumalpha}}
\newtheorem*{rep@theorem}{\rep@title} \newcommand{\newreptheorem}[2]{%
\newenvironment{rep#1}[1]{%
\def\rep@title{\bf #2 \ref{##1}}%
\begin{rep@theorem} }%
{\end{rep@theorem} } }
\newcommand{\abs}[1]{\left|#1\right|}               
\newcommand{\set}[1]{\left\{#1\right\}}             
\newcommand{\angl}[1]{\left<#1\right>}              
\newcommand{\Z}{\mathbb{Z}}
\newcommand{\R}{\mathbb{R}}
\newcommand{\M}{\mathsf{M}}    
\newcommand{\x}{\mathsf{x}}    
\newcommand{\y}{\mathsf{y}}    
\def\clap#1{\hbox to 0pt{\hss#1}}
\def\mathclap{\mathpalette\mathclapinternal}
\def\mathclapinternal#1#2{%
\clap{$\mathsurround=0pt#1{#2}$}}
\newcommand*{\dotleq}{\mathbin{\kern0.4em\cdot%
\mathclap{\raisebox{-0.05em}{$\leqslant$}}}}
\DeclareMathOperator{\rk}{rk}       
\DeclareMathOperator{\supp}{supp} 
\DeclareMathOperator{\dom}{{dom}}          
\DeclareMathOperator{\IN}{{IN}}
\newcommand{\1}{\mathbf{1}}
\newcommand{\HH}{\mathrm{H}}
\newcommand{\LL}{\mathrm{L}}
\def\strL{\mathring{\mathrm{L}}}
\newcommand{\Mnat}{$\text{M}^\natural$}
\numberwithin{equation}{section}
\begin{document}

\title{Tree metrics and log-concavity for matroids}
\author{Federico Ardila--Mantilla} 
\address{%
  Queen Mary University of London and San Francisco State University}
   \email{federico@sfsu.edu}
  \author{Sergio Cristancho} 
\address{Princeton University} 
\email{sergio.cris@princeton.edu}
\author{Graham Denham} 
 \address{University of Western Ontario} \email{gdenham@uwo.ca}
\author{\\ Christopher Eur} 
\address{Carnegie Mellon University} \email{ceur@cmu.edu}
\author{June Huh} 
\address{Princeton University and Korea Institute for Advanced Study} \email{huh@princeton.edu}
\author{Botong Wang} 
\address{University of Wisconsin-Madison} \email{wang@math.wisc.edu}

\renewcommand{\shortauthors}{F.~Ardila--Mantilla, S.~Cristancho, G.~Denham, C.~Eur, J.~Huh, B.~Wang}

\begin{abstract}
We show that a set function $\nu$ 
  satisfies the gross substitutes property if and only if its homogeneous generating polynomial $Z_{q,\nu}$ is a Lorentzian polynomial  for all positive $q \le 1$, answering a question of Eur--Huh. We achieve this by giving a rank $1$ upper bound for the distance matrix of an ultrametric tree, refining a classical result of Graham--Pollak.
  This characterization enables us to resolve open problems that strengthen Mason’s log-concavity conjectures for the numbers of independent sets of a matroid:  one posed by Giansiracusa–Rinc\'on–Schleis–Ulirsch for valuated matroids, and two posed by Dowling in 1980 and Zhao in 1985 for ordinary matroids.
\end{abstract}

\maketitle

\section{\textsf{Introduction}}

Let $E$ be a finite set of cardinality $n$. 
 A \emph{matroid} $\M$ on $E$ is given by a nonempty collection $\IN(\M)$ of subsets of $E$, called \emph{independent sets} of $\M$, satisfying the following properties:
\begin{enumerate}[(1)]\itemsep 5pt
\item If $I\in \IN(\M)$ and $J$ is a subset of $I$, then $J\in \IN(\M)$.
\item If $I,J\in \IN(\M)$ and $\abs{I} > \abs{J}$, then there is $i\in I\setminus J$ such that $J\cup i \in \IN(\M)$.
\end{enumerate}
The \emph{rank} of $\M$ is the common cardinality of the maximal independent sets. For background and any undefined terminology in matroid theory, we refer to \cite{OxleyMatroidTheory}.

Let $I_k=I_{k}(\M)$ be the number of
independent sets of $\M$ of cardinality $k$. 
Mason~\cite{Mason72}
conjectured that the following three families of inequalities hold for all $0<k<n$:
\begin{align}
  I_k^2&\geq I_{k-1}I_{k+1}, \tag{M1} \label{eq:weakMason}\\
  I_k^2 &\geq \Big(1+\frac1k\Big)I_{k-1}I_{k+1}, \tag{M2}
  \label{eq:mediumMason}\\
  I_k^2 & \geq \Big(1+\frac1k\Big)\Big(1+\frac1{n-k}\Big)I_{k-1}I_{k+1}. \tag{M3}\label{eq:strongMason}
\end{align}
In other words, the sequences $I_k$, $k!I_k$, and
$I_k/\binom{n}{k}$ are, respectively, \emph{log-concave} in $k$. The last property is referred to as the \emph{ultra log-concavity} of the sequence $I_0,\ldots,I_n$. Clearly, \eqref{eq:strongMason} implies \eqref{eq:mediumMason} implies \eqref{eq:weakMason} for a given matroid.

Conjecture \eqref{eq:weakMason} was proved for realizable matroids in \cite{Lenz13}, by building on the work of \cite{HK12}, and for arbitrary matroids in \cite{AHK18}. Conjecture \eqref{eq:mediumMason} was proved  in \cite{HSW22} using the main results of \cite{AHK18}, while Conjecture \eqref{eq:strongMason} was proved independently in~\cite{ALOVIII} and~\cite{BH20}. These inequalities motivated the development of combinatorial Hodge theory~\cite{AHK18} and the theory of Lorentzian polynomials~\cite{BH20}.

In Theorem~\ref{thm:valuated}, we extend \eqref{eq:strongMason} from matroids on $E$ to \Mnat-concave functions on $2^{E}$, which are precisely the set functions  
  satisfying the \emph{gross substitutes property} from economics. 
  This answers a question of  Giansiracusa--Rincón--Schleis--Ulirsch in \cite[Question 4.1]{GRSU24}, who asked whether \eqref{eq:strongMason} holds for the class of \Mnat-concave functions constructed from valuated matroids discussed in Example~\ref{ex:valuated}. 
  In Theorem~\ref{thm:qpolynomial},
  we prove a multivariate polynomial inequality that refines \eqref{eq:mediumMason}, affirmatively settling conjectures of Dowling~\cite{Do80} and Zhao~\cite{Zh85} that were recently popularized by Pak \cite{Pak24}.
Both results follow from our main result, Theorem~\ref{thm:qLorentzian}, characterizing \Mnat-concave functions in terms of Lorentzian polynomials:
\begin{quote}
A set function $\nu$ on $E$ satisfies the gross substitutes property if and only if its homogeneous generating polynomial $Z_{q,\nu} = \sum_{S \subseteq E} q^{-\nu(S)} x^S y^{\abs{E}-\abs{S}}$ is a Lorentzian polynomial for all positive $q \le 1$.
\end{quote}
This answers a question of Eur--Huh in \cite[Section 5.1]{EH20}, who asked for a characterization of set functions whose homogeneous generating polynomials are Lorentzian for all positive $q \le 1$.\footnote{The function $r$ from \cite[Example 5.4]{EH20} is  not \Mnat-concave, contrary to the authors' claim: the maximum of $r(\set{i,j})+r(\set{k})$ is achieved uniquely at $k=2$.}  
The key ingredient is Theorem~\ref{thm:psd}, which
gives a rank $1$ upper bound for the distance matrix of an ultrametric tree. This refines the classical result of Graham--Pollak that the distance matrix of a tree has exactly one positive eigenvalue.
In the rest of the introduction, we describe these results in detail.

\subsection{Inequalities for \Mnat-concave set functions}\label{sec:Mnat}

A  function $\nu \colon 2^{E} \to \R\cup\set{-\infty}$ is said to be \textit{\Mnat-concave} if it satisfies the \emph{exchange property}: For any $I_1, I_2 \subseteq E$ and $i_1 \in I_1\setminus I_2$, we have 
\begin{enumerate}[(1)]\itemsep 5pt
        \item either $\nu(I_1) + \nu(I_2) \leq \nu(I_1 \setminus i_1) +\nu(I_2 \cup i_1)$, or 
        \item there is $i_2\in I_2\setminus I_1$ such that $\nu(I_1)+\nu(I_2)\leq \nu((I_1\setminus i_1) \cup i_2)+\nu((I_2\setminus i_2) \cup i_1)$.
\end{enumerate}
The \emph{effective domain} of such a function is the set 
\[
\dom(\nu) \coloneqq \set{ I \subseteq {E} \, \colon \, \nu(I) \neq -\infty  }.
\]
For our purposes, without loss of generality, we may suppose that  $\dom(\nu)$ is nonempty.

Introduced by Murota and Shioura \cite{MS99}, \Mnat-concave functions are central objects in discrete convex analysis.
Fujishige--Yang \cite{FY03} and Reijnierse--van Gellekom--Potters \cite{Re02} independently proved that \Mnat-concavity is equivalent to the \emph{gross substitutes property} in economics, introduced by Kelso and Crawford \cite{KC82} two decades earlier.
For a comprehensive introduction to \Mnat-concave functions, we refer the reader to \cite[Chapter 6]{Mu03}. 
The notion of 
\Mnat-concave functions generalizes several concepts in matroid theory, including independent sets, rank functions, and valuated matroids.

\begin{example}[Matroid independent sets] \label{ex:ordinary}
	Let $\M$ be a matroid  on ${E}$. The \emph{independent set indicator function} of $\M$, defined by
    \[
	\nu_\M (S) \coloneqq \begin{cases}
		\quad 0 &\text{if $S$ is an independent subset of $\M$,}\\
		-\infty &\text{if $S$ is not an independent subset of $\M$,}
	\end{cases}
	\]
    is an \Mnat-concave function whose effective domain is
the collection of independent sets $\IN(\M)$.
\end{example}

\begin{example}[Matroid rank functions]\label{ex:rank}
The \emph{rank function} of $\M$, defined by
    \[
    \rk_\M(S) \coloneqq \max\Big\{\abs{I}, \ \text{$I$ is an independent set of $\M$ in $S$}\Big\},
    \]
    is an \Mnat-concave function whose effective domain  is  $2^{E}$. 
    More generally, any non-negative linear combination of the rank functions of the constituent matroids in a flag matroid is a \Mnat-concave function \cite[Theorem 3]{Sh12}. 
\end{example}

\begin{example}[Valuated matroid independent sets]\label{ex:valuated}
    A \emph{valuated matroid} of rank $d$ on $E$ is a function  $\underline{\nu}\colon {E\choose d}\to \R\cup\set{-\infty}$ 
    that satisfies the \emph{symmetric exchange property}: For any $d$-element subsets $B_1,B_2$ of $E$ and $b_1\in B_1\setminus B_2$, there is $b_2\in B_2\setminus B_1$ such that
    \[
    \underline{\nu}(B_1)+\underline{\nu}(B_2)\leq
    \underline{\nu}((B_1\setminus b_1)\cup b_2)+\underline{\nu}((B_2\setminus b_2)\cup b_1).
    \]
    Valuated matroids are precisely the possible height functions in a regular subdivision of a matroid polytope into matroid polytopes. \cite{Speyer}
 
Murota~\cite{Mu97} considered the  \Mnat-concave extension $ \nu$ of $\underline{\nu}$ to $2^{E}$ given by
\[
    \nu(S)\coloneqq \max\Big\{\underline{\nu}(B), \  \text{$B$ is a $d$-element subset of $E$ containing $S$}\Big\},
\]
where the maximum of the empty set is defined to be $-\infty$. The effective domain of $\nu$ is the collection of independent sets of a matroid on $E$, the \emph{underlying matroid} of $\underline{\nu}$.
\end{example}

For a function $\nu:2^{E} \to \mathbb{R} \cup \{-\infty\}$, an integer $0 \le k \le n$, and a  real number $0<q\leq 1$, we set
\[
I_{q,\nu;k}\coloneqq \sum_{I\in \binom{E}{k}} q^{-\nu(I)}, 
  \]
where, by convention, $q^\infty=0$. This sequence was first considered by Giansiracusa, Rincón, Schleis, and Ulirsch for valuated matroids in the context of Example~\ref{ex:valuated}, who proved in \cite[Theorem A]{GRSU24} that Murota's extension $\nu$ of a valuated matroid  satisfies 
a generalization of \eqref{eq:mediumMason}:
\[
  I_{q,\nu;k}^2 \geq \Big(1+\frac1k\Big)I_{q,\nu;k-1}  I_{q,\nu;k+1} \ \  \text{ for all $0<k<n$ and $0 < q \leq 1$}.
\]
They asked in \cite[Question 4.1]{GRSU24} whether $\nu$ satisfies a generalization of \eqref{eq:strongMason}, and they provided extensive numerical evidence in support of this.
We prove their prediction in the more general setting of \Mnat-concave functions.

\begin{theorem}\label{thm:valuated}
For any \Mnat-concave function $\nu\colon 2^{E} \to \mathbb{R} \cup \{-\infty\}$,  we have
\[
I_{q,\nu;k}^2 \geq \Big(1+\frac1k\Big)\Big(1+\frac1{n-k}\Big) I_{q,\nu;k-1}
    I_{q,\nu;k+1} \ \ \text{for  all $0<k<n$ and $0<q\leq 1$.}
\]
\end{theorem}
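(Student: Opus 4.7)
The plan is to deduce Theorem~\ref{thm:valuated} from the characterization in Theorem~\ref{thm:qLorentzian} together with the ultra-log-concavity property of coefficients of bivariate homogeneous Lorentzian polynomials. The argument is a short one-step reduction once Theorem~\ref{thm:qLorentzian} is available.

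Since $\nu$ is \Mnat-concave, Theorem~\ref{thm:qLorentzian} tells us that
\[
Z_{q,\nu}(x_1,\ldots,x_n,y) \;=\; \sum_{S \subseteq E} q^{-\nu(S)}\, x^S\, y^{\abs{E} - \abs{S}}
\]
is Lorentzian for every $0 < q \le 1$. I would then specialize all the variables $x_i$ to a single variable $x$. Since the Lorentzian class is closed under substitution by non-negative linear forms, the resulting bivariate homogeneous polynomial
\[
\sum_{k=0}^{n} I_{q,\nu;k}\, x^k\, y^{n-k}
\]
remains Lorentzian.

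A standard fact about Lorentzian polynomials is that for any bivariate homogeneous Lorentzian polynomial $\sum_k c_k\, x^k y^{n-k}$, the normalized sequence $c_k / \binom{n}{k}$ is log-concave in $k$. Applying this to the polynomial above and rewriting using the binomial identity
\[
\frac{\binom{n}{k}^2}{\binom{n}{k-1}\binom{n}{k+1}} \;=\; \Big(1 + \frac{1}{k}\Big)\Big(1 + \frac{1}{n-k}\Big)
\]
gives precisely the inequality claimed in the theorem.

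All of the genuine difficulty sits upstream, in establishing Theorem~\ref{thm:qLorentzian}, which in turn rests on the rank-one upper bound for ultrametric distance matrices of Theorem~\ref{thm:psd}. Once those tools are in hand, the deduction above is purely formal, and I do not anticipate any serious obstacle beyond invoking the two standard closure and ultra-log-concavity facts for Lorentzian polynomials correctly.
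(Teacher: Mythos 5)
Your proposal is correct and follows the same route as the paper: apply Theorem~\ref{thm:qLorentzian}, specialize all $x_i$ to one variable (using closure of Lorentzian polynomials under nonnegative linear substitutions, \cite[Theorem 2.10]{BH20}), and invoke ultra-log-concavity of the coefficients of a bivariate Lorentzian polynomial \cite[Example 2.3]{BH20}. The binomial-coefficient identity you use to convert ultra-log-concavity into the stated inequality is exactly right.
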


We deduce Theorem~\ref{thm:valuated} from an analytic characterization of 
 \Mnat-concave functions in terms of  \emph{Lorentzian polynomials} \cite{BH20}, whose definition we recall in Section~\ref{SectionLorentzian}.
We define the \emph{homogeneous generating polynomial}  a function $\nu\colon 2^{E} \to \mathbb{R} \cup \{-\infty\}$ by
\[
 Z_{q,\nu}(\x,y) \coloneqq \sum_{S\subseteq E}q^{-\nu(S)}\x^S y^{\abs{E \setminus S}} \ \ \text{with} \ \    \x^S \coloneqq \prod_{i \in S} x_i,
\]
where $\x=(x_i)_{i \in E}$ and $y$ is a  homogenizing variable different from $x_i$ for $i \in E$. We view $Z_{q,\nu}$ as a homogeneous polynomial of degree $n$ in $n+1$ variables $(\x,y)$ with a positive parameter $q$.

\begin{theorem}\label{thm:qLorentzian}
A function $\nu: 2^{E} \to \mathbb{R} \cup \{-\infty\}$ is \Mnat-concave if and only if its homogeneous generating polynomial
$Z_{q,\nu}$ is a Lorentzian polynomial for all positive $q \le 1$.
\end{theorem}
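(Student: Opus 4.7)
The plan is to verify Lorentzianness via the signature criterion: for each $\alpha$ of weight $n-2$, the Hessian of $\partial^\alpha Z_{q,\nu}$ should have at most one positive eigenvalue. Writing $\alpha = (K,\beta)$ with $K \subseteq E$ (by multiaffinity in $\x$) and $\beta = n - |K| - 2$, and setting $m = |E\setminus K|$, a direct computation gives
\begin{align*}
\frac{\partial^K\partial_y^\beta Z_{q,\nu}}{(m-2)!} &= q^{-\nu(K)}\tbinom{m}{2}y^2 + (m-1)\sum_{i\in E\setminus K}q^{-\nu(K\cup i)}x_i y \\
&\quad+ \sum_{\{i,j\}\subseteq E\setminus K}q^{-\nu(K\cup\{i,j\})}x_i x_j.
\end{align*}
Conjugating the Hessian of this quadratic by the diagonal matrix with entries $q^{\nu(K\cup i) - \nu(K)/2}$ on the $x$-coordinates and $q^{\nu(K)/2}$ on the $y$-coordinate yields a congruent bordered matrix
\[
\widetilde H = \begin{pmatrix} A & (m-1)\mathbf{1} \\ (m-1)\mathbf{1}^\top & m(m-1) \end{pmatrix}, \qquad A_{ij} = q^{D_{ij}}\ (i\neq j), \quad A_{ii}=0,
\]
where $D_{ij} := \nu(K\cup i)+\nu(K\cup j)-\nu(K\cup\{i,j\})-\nu(K) \ge 0$ by applying the \Mnat-exchange axiom to the pair $(K\cup\{i,j\},\,K)$. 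Applying the \Mnat-exchange axiom to $(K\cup\{i,k\},\,K\cup\{j\})$ with excess element $k$ further yields $D_{ik}\ge \min(D_{ij},D_{jk})$, which, because $q\le 1$, is exactly the ultrametric inequality on the symmetric matrix $A$. Theorem~\ref{thm:psd} then furnishes the rank-one PSD upper bound $A \preceq \tfrac{m-1}{m}\,\mathbf{1}\mathbf{1}^\top$, so the Schur complement of the corner $m(m-1)$ in $\widetilde H$---namely $A - \tfrac{m-1}{m}\,\mathbf{1}\mathbf{1}^\top$---is negative semidefinite. Hence $\widetilde H$ has at most one positive eigenvalue, certifying Lorentzianness (together with M-convexity of $\supp(Z_{q,\nu})$, which follows from \Mnat-convexity of $\dom(\nu)$).

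\textbf{Converse direction.} Assume $Z_{q,\nu}$ is Lorentzian for every $q\in(0,1]$. The M-convexity of $\supp(Z_{q,\nu})$ at any single $q$ immediately gives \Mnat-convexity of $\dom(\nu)$. For the exchange inequality, given $S_1,S_2\in \dom(\nu)$ and $i_1\in S_1\setminus S_2$, I would set $K = S_1\cap S_2$ and examine the $3\times 3$ principal submatrix of $\widetilde H$ indexed by $\{i_1, i_2, y\}$ for each candidate swap partner $i_2 \in S_2\setminus S_1$. The at-most-one-positive-eigenvalue condition translates to a $3\times 3$ determinant inequality, and inspecting its dominant behavior as $q$ varies over $(0,1]$ separates the conclusion into the two cases of \Mnat-concavity: case (1) (shift) corresponds to the contribution from the $y$-direction and case (2) (swap) to a nonzero contribution from some swap partner $i_2$.

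\textbf{Main obstacle.} The crucial analytic input is Theorem~\ref{thm:psd}: the rank-one PSD upper bound $A \preceq \tfrac{m-1}{m}\,\mathbf{1}\mathbf{1}^\top$ with the precise constant needed to match the Schur complement of $\widetilde H$. Identifying the ultrametric structure on $A$ via the \Mnat-exchange axiom is routine; matching this bound to the $y$-border is where the main technical content lies. Handling the degenerate cases in which some of $\nu(K), \nu(K \cup i),$ or $\nu(K\cup\{i,j\})$ equals $-\infty$---producing zero entries in the Hessian and preventing the diagonal congruence above---requires a separate limiting argument from the generic (finite) case.
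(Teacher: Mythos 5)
Your forward direction ($\nu$ \Mnat-concave $\Rightarrow$ $Z_{q,\nu}$ Lorentzian) is essentially the paper's own argument. You verify the Lorentzian criterion by computing the Hessian of $\partial^K\partial_y^{m-2}Z_{q,\nu}$ directly for each $K\subseteq E$; the paper does this by induction on $n$, using $\partial_iZ_{q,\nu}=Z_{q,\nu/i}$ and the fact that contractions of \Mnat-concave functions are \Mnat-concave, which, once unrolled, gives exactly your computation with $K$ playing the role of the contracted set. You then apply the same diagonal congruence, take the Schur complement against the $y$-border, and invoke Theorem~\ref{thm:psd}. Your derivation of the ultrametric inequality on the matrix $A$ from the exchange axiom is the content of the paper's Lemma~\ref{lem:ultra} (the paper uses the local exchange property of \cite[Theorem 6.4]{Mu03} to get the three-point condition, which is equivalent to what you derive), and matching the Schur complement's bound $\tfrac{m-1}{m}\mathbf{1}\mathbf{1}^\top$ to the constant $1-\tfrac{1}{m}$ in Theorem~\ref{thm:psd} is precisely the key estimate.

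Two items in your proposal are left incomplete, both of which the paper handles. First, your diagonal congruence involves factors such as $q^{\nu(K)/2}$ and $q^{\nu(K\cup i)-\nu(K)/2}$, which degenerate when $\nu$ takes the value $-\infty$; you flag this as requiring a separate limiting argument but do not carry it out. The paper resolves it cleanly by approximating $\nu$ by a sequence of \Mnat-concave functions $\nu_k$ with $\dom(\nu_k)=2^E$, which exists by \cite[Lemma 3.27]{BH20} together with the fact that restrictions of M-concave functions to coordinate half-spaces remain M-concave, and then appealing to closedness of the space of Lorentzian polynomials. Your proof would need an argument of this kind to be complete. Second, your converse direction ($Z_{q,\nu}$ Lorentzian for all $q\in(0,1]$ implies \Mnat-concavity) is only a sketch; note that this implication is already a known result, which the paper simply cites as \cite[Proposition 5.5]{EH20}, so only the forward implication is new --- and that is exactly where your proposal tracks the paper's actual proof.
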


Theorem~\ref{thm:qLorentzian} answers a question of Eur--Huh in \cite[Section 5]{EH20}, who proved the ``if'' direction of Theorem~\ref{thm:qLorentzian} in \cite[Proposition 5.5]{EH20}. 
The other direction is a new contribution, which generalizes the following known results:
        \begin{enumerate}[(1)]\itemsep 5pt
            \item When $\nu=\nu_\M$ is the independent set indicator function of a matroid $\M$ in Example~\ref{ex:ordinary}, Theorem~\ref{thm:qLorentzian} recovers the statement that the \emph{homogeneous independent set generating polynomial}
            \[
            I(\x,y) \coloneqq \sum\limits_{I\in \IN(\M)} \x^I y^{\abs{E \setminus I}},
            \]
            is a Lorentzian polynomial \cite[Section 4.3]{BH20}. See \cite[Theorem 4.1]{ALOVIII} for an equivalent statement. 
            \item When $\nu=\rk_\M$ is the rank function of a matroid $\M$ as in Example~\ref{ex:rank}, Theorem~\ref{thm:qLorentzian} recovers the statement that the \emph{homogeneous multivariate Tutte polynomial} 
            \[
            T_{q}(\x,y)\coloneqq \sum\limits_{S\subseteq E} q^{-\rk_\M(S)} \x^S y^{\abs{E \setminus S}},
            \]
            is a Lorentzian polynomial for all positive $q \le 1$ in  \cite[Theorem 4.10]{BH20}. The Lorentzian property of the homogeneous independent set generating polynomial follows from the identity 
            \[
            I(\x,y)=\lim_{q \to 0} T_{q}(q \x,y).
            \]
        \end{enumerate}
Theorem~\ref{thm:valuated} follows from Theorem~\ref{thm:qLorentzian} by identifying the variables $x_i$ for $i \in E$, see Section~\ref{sec:genpoly}.
Mason's strongest conjecture \eqref{eq:strongMason} is the special case of Theorem~\ref{thm:valuated}  when $\nu=\nu_\M$.

\subsection{Polynomial inequalities for matroids}
Let $\M$ be a matroid on $E$, and let $\IN_k(\M)$ be the collection of $k$-element independent sets of $\M$. We consider the generating polynomial
\[
	I_k(\x)\coloneqq\sum_{S\in\IN(\M)_k} \x^S, \ \ \text{with} \ \  \x^S \coloneqq \prod_{i \in S} x_i,
    \]
where $\x=(x_i)_{i\in E}$. 
 For multivariate polynomials
$f,g\in \Z[x_i]_{i \in E}$, we write $f\succeq g$ if all the coefficients of the difference $f-g$ are nonnegative.
 Dowling~\cite[Section 2]{Do80} conjectured that the polynomial refinement of Mason's conjecture~\eqref{eq:weakMason} holds for any matroid: that is, the inequality $ I_k(\x)^2 \succeq I_{k-1}(\x) I_{k+1}(\x)$ holds for all $k$, coefficient by coefficient.  Subsequently, Zhao~\cite{Zh85} made the stronger conjecture that the polynomial version of \eqref{eq:mediumMason} holds for all matroids.

 We prove Zhao's and Dowling's conjectures.
Using Theorem~\ref{thm:qLorentzian}, 
we establish a stronger polynomial inequality for the more general class of \Mnat-concave functions.
For a function $\nu:2^E \to \mathbb{R} \cup \{-\infty\}$, we set
\[
I_{q,\nu;k}(\x)\coloneqq \sum_{S\in\binom{E}{k}}q^{-\nu(S)}\x^S. 
\]
When $\nu$ is the independent set indicator function of $\M$,  the following theorem states that the polynomial version of \eqref{eq:mediumMason}, and hence also \eqref{eq:weakMason}, holds for $\M$. 

\begin{theorem}\label{thm:qpolynomial}
For any \Mnat-concave function $\nu:2^E \to \mathbb{R} \cup \{-\infty\}$, we have the coefficientwise inequality
  \[
  I_{q,\nu;k}(\x)^2 \succeq \Big(1+\frac1k\Big)I_{q,\nu;k-1}(\x) I_{q,\nu;k+1}(\x)
   \ \ \text{for  all $0<k<n$ and $0<q\leq 1$.}
  \]
\end{theorem}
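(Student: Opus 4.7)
The plan is to reduce the coefficient-wise inequality to a single extremal coefficient, then extract the remaining numerical inequality from the Lorentzian polynomial guaranteed by Theorem~\ref{thm:qLorentzian} via a doubling trick. For the reduction: since each $I_{q,\nu;j}(\x)$ is multilinear, the coefficient $[\x^\alpha]$ vanishes on both sides unless $\alpha\in\{0,1,2\}^E$. Writing $A=\{i:\alpha_i=2\}$ and $B=\{i:\alpha_i=1\}$, a direct expansion shows both coefficients vanish unless $|B|=2(k-|A|)$, in which case they equal the leading coefficients $[\x^{\mathbf{1}_B}]$ of the analogous expressions for the localized function $\tilde\nu(B'):=\nu(A\cup B')$ on $B$ at level $\tilde k:=k-|A|$. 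The exchange axiom transfers verbatim from $\nu$ to $\tilde\nu$, so $\tilde\nu$ is \Mnat-concave; since $\tilde k\le k$ gives $1+1/\tilde k\ge 1+1/k$, it suffices to prove the leading-coefficient inequality for every \Mnat-concave function on a ground set $E$ of size $n=2k$:
\[
\sum_{|S|=k} q^{-\nu(S)-\nu(E\setminus S)} \;\geq\; \Big(1+\tfrac{1}{k}\Big)\sum_{|S|=k-1} q^{-\nu(S)-\nu(E\setminus S)}.
\]

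For this, Theorem~\ref{thm:qLorentzian} supplies the Lorentzian polynomial $Z_{q,\nu}(\x,y)$. Because Lorentzian polynomials are closed under products (and under extension to a larger variable set), the polynomial
\[
F(\x,y,y') \;:=\; Z_{q,\nu}(\x,y) \cdot Z_{q,\nu}(\x,y')
\]
is Lorentzian of degree $2n$ in $(\x,y,y')$. Applying the operator $\partial_\x^{\mathbf{1}_E}:=\prod_{i\in E}\partial_{x_i}$ and then specializing $\x\to\mathbf{0}$ — both Lorentzian-preserving operations — produces the bivariate Lorentzian polynomial
\[
H(y,y') \;=\; \sum_{S\sqcup T=E} q^{-\nu(S)-\nu(T)}\, y^{n-|S|}(y')^{n-|T|} \;=\; \sum_{j=0}^{n} M_j\, y^{n-j}(y')^j,
\]
where $M_j:=\sum_{|S|=j} q^{-\nu(S)-\nu(E\setminus S)}$, satisfying the symmetry $M_j=M_{n-j}$ under $S\leftrightarrow E\setminus S$.

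Any Lorentzian bivariate homogeneous polynomial dehomogenizes to a real-rooted polynomial with non-positive roots, so Newton's inequality applied to $H$ gives $M_j^2\ge \tfrac{(j+1)(n-j+1)}{j(n-j)} M_{j-1}M_{j+1}$. Setting $n=2k$ and $j=k$, the factor becomes $(1+1/k)^2$, and combined with $M_{k-1}=M_{k+1}$ this yields $M_k\ge (1+1/k) M_{k-1}$, which is precisely the leading-coefficient inequality. The main conceptual step is finding the correct polynomial identity: the doubling $Z_{q,\nu}(\x,y)Z_{q,\nu}(\x,y')$ followed by the extraction $\partial_\x^{\mathbf{1}_E}|_{\x=\mathbf{0}}$ is what converts the Lorentzian $Z_{q,\nu}$ into a bivariate Lorentzian whose coefficients package exactly the paired sums $q^{-\nu(S)-\nu(E\setminus S)}$ that appear in the reduced inequality; verifying the various Lorentzian closure properties at each step is routine.
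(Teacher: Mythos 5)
Your proof is correct and rests on the same two pillars as the paper's: the product $Z_{q,\nu}(\x,y)\,Z_{q,\nu}(\x,y')$ with two homogenizing variables is Lorentzian, and extracting a multilinear coefficient from it yields a bivariate Lorentzian polynomial whose ultra-log-concavity gives the inequality. Where you differ is in the reduction that makes the binomial/factorial bookkeeping come out right. The paper extracts the coefficient of $\x^S$ for an arbitrary multiset $S$ (via Lemma~\ref{lem:coeffLorentzian}), obtains the inequality $(n-j)!(n-k)!\,N^S_{q,\nu}(j,k)\ge (n-i)!(n-l)!\,N^S_{q,\nu}(i,l)$, and then replaces $\nu$ by an auxiliary $\nu'$ on a duplicated ground set $E'$ of size $|S|$ so that $n'-j=k$, etc.; this yields the stronger statement $j!\,k!\,I_{q,\nu;j}(\x)\,I_{q,\nu;k}(\x)\succeq i!\,l!\,I_{q,\nu;i}(\x)\,I_{q,\nu;l}(\x)$ for all $i+l=j+k$. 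You instead reduce each monomial coefficient to a top coefficient by contracting at the doubly-covered indices $A$ and restricting to the singly-covered indices $B$ (both M$^\natural$-concavity-preserving), land on a ground set of the exact size $2\tilde k$ where the binomials symmetrize, and close using the palindromic symmetry $M_{\tilde k-1}=M_{\tilde k+1}$ together with $1+1/\tilde k\ge 1+1/k$. This is a clean route to the stated theorem, though it is tailored to the central case $j=k$ and does not directly give the paper's more general four-index inequality. One small inaccuracy worth flagging: a bivariate Lorentzian polynomial need \emph{not} dehomogenize to a real-rooted univariate polynomial — bivariate Lorentzian is equivalent only to ultra-log-concavity of the coefficient sequence with nonnegativity and no internal zeros (\cite[Example 2.3]{BH20}); real-rootedness is strictly stronger for degree $\ge 3$. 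Fortunately, ultra-log-concavity is precisely the Newton-type inequality $M_j^2\ge\frac{(j+1)(n-j+1)}{j(n-j)}M_{j-1}M_{j+1}$ that your argument uses, so the conclusion stands; only the justification should be rephrased to cite the log-concavity characterization rather than real-rootedness.
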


In Section~\ref{sec:polynomial}, we prove the stronger\footnote{Unlike sequences of positive numbers, a sequence of positive polynomials can be locally log-concave but not globally log-concave coefficientwise; an example is the sequence $x, x+y, x+\frac74y, 3y$.}
statement that, for any \Mnat-concave function $\nu$ and any $0\leq i \leq j \leq k \leq l \leq n$ with $i+l=j+k$, we have 
 \[
   j! \, I_{q,\nu;j}(\x) \cdot k! \, I_{q,\nu;k}(\x) \succeq i! \, I_{q,\nu;i}(\x) \cdot  l! \, I_{q,\nu;l}(\x) \ \ \text{for all $0<q\leq 1$}.
  \]
For matroids, the displayed inequality admits the following combinatorial interpretation.
For a matroid $\M$ on $E$ and nonnegative integers $a$ and $b$ with $a+b=n$, we write $N_\M(a,b)$ for the number of partitions of $E =A \sqcup B$ into ordered independent sets $A$ and $B$ of $\M$ of sizes $a$ and $b$.

\begin{corollary}\label{cor:partition}
For any matroid $\M$ on $E$ and any  $0\leq i \leq j \leq k \leq l \leq n$ with $i+l=j+k$, 
\[
j! \, k! \, N_\M(j,k) \ge i! \, l! \, N_\M(i,l).
\]
\end{corollary}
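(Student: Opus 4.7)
The plan is to deduce Corollary~\ref{cor:partition} as a direct specialization of the stronger polynomial inequality stated right after Theorem~\ref{thm:qpolynomial}, namely
\[
j! \, I_{q,\nu;j}(\x) \cdot k! \, I_{q,\nu;k}(\x) \succeq i! \, I_{q,\nu;i}(\x) \cdot l! \, I_{q,\nu;l}(\x),
\]
applied to the independent set indicator function $\nu = \nu_\M$ of Example~\ref{ex:ordinary} (which is \Mnat-concave) and any fixed positive $q \le 1$. For this choice of $\nu$, the factor $q^{-\nu(S)}$ equals $1$ when $S \in \IN(\M)$ and $0$ otherwise, so $I_{q,\nu;k}(\x)$ reduces to $I_k(\x) = \sum_{I \in \IN_k(\M)} \x^I$ and is independent of $q$.

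The key observation is that for $a + b = n$, the coefficient of the top-degree squarefree monomial $\x^E = \prod_{i \in E} x_i$ in the product $I_a(\x) \cdot I_b(\x)$ equals the number of ways to write $E = A \sqcup B$ with $A, B \in \IN(\M)$ and $\abs{A} = a$, $\abs{B} = b$. That is exactly $N_\M(a, b)$. Indeed, expanding $I_a(\x) I_b(\x) = \sum_{A,B} \x^A \x^B$ over pairs of independent sets of sizes $a$ and $b$, the monomial $\x^A \x^B$ equals $\x^E$ precisely when $A$ and $B$ are disjoint and their union is $E$.

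Since $i + l = j + k$ and both sums must equal $n$ for the corresponding $N_\M$ values to be nonzero (otherwise the inequality reads $0 \ge 0$ and is trivial), reading off the coefficient of $\x^E$ on both sides of the coefficientwise inequality yields
\[
j!\,k!\, N_\M(j,k) \ge i!\,l!\, N_\M(i,l),
\]
which is the desired conclusion. There is no real obstacle here: the entire argument is an extraction of a single coefficient, once the stronger polynomial form of the log-concavity inequality from Section~\ref{sec:polynomial} is in hand. The only thing to verify carefully is the combinatorial identification of $N_\M(a,b)$ with the coefficient of $\x^E$ in $I_a(\x)I_b(\x)$, which is immediate from the definitions.
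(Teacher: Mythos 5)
Your proposal is correct and follows exactly the route the paper intends: specialize the stronger coefficientwise inequality from Section~\ref{sec:polynomial} to $\nu = \nu_\M$, so that $I_{q,\nu;a}(\x) = I_a(\x)$, and then extract the coefficient of the squarefree monomial $\x^E$, which is $N_\M(a,b)$ precisely because $\x^A\x^B = \x^E$ forces $A \sqcup B = E$. The paper presents the corollary as the ``combinatorial interpretation'' of that inequality for matroids, which is this same coefficient extraction.
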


As Pak pointed out, the polynomial version of Mason's strongest inequality \eqref{eq:strongMason} is not true: the sequence $I_k(\x)$ need not be  coefficient-wise ultra log-concave. For the uniform matroid of rank 2 on $\{1,2\}$, we have
\[
I_1(x_1,x_2)^2/2^2-I_0(x_1,x_2)I_2(x_1,x_2)=\frac{1}{4}(x_1^2+x_2^2-2x_1x_2)=\frac{1}{4}(x_1-x_2)^2,
\]
which is non-negative numerically, but not coefficient-wise. 
This illustrates the fact that Pak's polynomial refinement is significantly stronger than Mason's original conjecture.

\subsection{An inequality for ultrametric trees}
A central ingredient in the proof of Theorem~\ref{thm:qLorentzian} is a result on tree distance matrices that we now describe.
In their design of efficient address systems for communication networks, Graham and Pollak \cite{GP71} introduced the \emph{distance matrix} of a graph, whose $ij$ entry is the distance between vertices $i$ and $j$. They showed that the signature of this matrix gives a lower bound for the addresses in their design, and proved that the distance matrix of any tree has the Lorentzian signature $(+,-,\cdots,-)$. The latter fact also follows from work of Schoenberg~\cite[Theorem 1]{Sch35}, using the fact that an ultrametric tree can be metrically embedded in an $\ell^2$-space \cite{TV83}.  Our proofs of the matroid inequalities above rely on a refinement of this result for ultrametric trees.

An \emph{ultrametric tree} is a rooted tree with nonnegative lengths on its edges such that all the leaves are at the same distance from the root. If the tree has $n$ leaves, let $D$ be the $n \times n$ \emph{leaf distance matrix}, whose $ij$ entry
is the distance between the leaves $i$ and $j$. 
For $n \times n$ real symmetric matrices $A$ and $B$, we write $A \ge B$  if the difference $A-B$ is positive semidefinite.
We write $\1_{n\times n}$ for the $n \times n$ matrix all of whose entries are $1$.

\begin{theorem}\label{thm:psd} 
For any ultrametric tree with $n$ leaves whose common distance from the root is $1$, 
\[
\left(1 - \frac{1}{n} \right)  \1_{n\times n}  \ge \frac{1}{2}D.
\]
The constant $(1 - \frac{1}{n})$ is the smallest number that makes the above statement true.
\end{theorem}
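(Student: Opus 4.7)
The plan is to rewrite the desired inequality in terms of the meet matrix of the tree and then induct on the size of the tree. Let $H$ be the $n \times n$ matrix whose $(i,j)$ entry is $\rho(i,j)$, the distance from the root to the least common ancestor of leaves $i$ and $j$. Since the tree is ultrametric with leaves at distance $1$ from the root, $D_{ij} = 2(1-\rho(i,j))$, so $\tfrac{1}{2}D = \mathbf{1}_{n\times n} - H$, and the inequality reduces to the rank-$1$ bound
\[
H \succeq \tfrac{1}{n}\mathbf{1}_{n\times n}.
\]

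I would prove this bound by induction on the tree, peeling off the root. Suppose the root has children $v_1, \ldots, v_k$ via edges of lengths $\ell_1, \ldots, \ell_k$, with subtrees $T_a$ containing $n_a$ leaves (with $n = \sum_a n_a$). Rescaling $T_a$ so that its leaves lie at distance $1$ from $v_a$ yields a strictly smaller ultrametric tree, whose meet matrix $H^{(a)}$ satisfies $H^{(a)} \succeq \tfrac{1}{n_a}\mathbf{1}_{n_a\times n_a}$ by the inductive hypothesis. Since two leaves in distinct subtrees have their LCA at the root (depth $0$), the matrix $H$ is block diagonal in the ordering by subtree, with $(a,a)$-block equal to $\ell_a\mathbf{1}_{n_a\times n_a} + (1-\ell_a)H^{(a)}$.

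To close the induction, for any $v \in \mathbb{R}^n$ written in block form $v = (v^{(1)}, \ldots, v^{(k)})$, set $t_a = \mathbf{1}^T v^{(a)}$. The block structure together with the inductive hypothesis gives
\[
v^T H v \;\geq\; \sum_a \alpha_a t_a^2, \qquad \alpha_a = \ell_a + \frac{1-\ell_a}{n_a}.
\]
A single application of Cauchy--Schwarz reduces the desired bound $\sum_a \alpha_a t_a^2 \geq \tfrac{1}{n}(\sum_a t_a)^2$ to the harmonic inequality $\sum_a 1/\alpha_a \leq n$, which is immediate from $\alpha_a \geq 1/n_a$ (valid because $\ell_a \geq 0$).

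The main obstacle I anticipate is choosing the right strengthening of Graham--Pollak for the induction to yield the sharp constant: the rank-$1$ formulation above is just strong enough that the elementary bound $1/\alpha_a \leq n_a$ aligns with leaf counts in the Cauchy--Schwarz step. Sharpness of $1 - \tfrac{1}{n}$ is then witnessed by the star tree on $n$ leaves with every edge of length $1$, where $H = I_n$, so $(1-\tfrac{1}{n})\mathbf{1}_{n\times n} - \tfrac{1}{2}D = I_n - \tfrac{1}{n}\mathbf{1}_{n\times n}$ is singular with $\mathbf{1}$ in its kernel.
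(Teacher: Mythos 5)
Your proof is correct, and it takes a genuinely different inductive route from the paper's. You pass to the meet matrix $H$ with entries $\rho(i,j)$ (the depth of the lowest common ancestor), observe $\tfrac{1}{2}D = \mathbf{1}_{n\times n} - H$, and reduce to $H \ge \tfrac{1}{n}\mathbf{1}_{n\times n}$. You then peel off the \emph{root}: $H$ is block-diagonal across the root's children, the $(a,a)$-block is $\ell_a\mathbf{1}+(1-\ell_a)H^{(a)}$ with $H^{(a)}$ the normalized meet matrix of the rescaled subtree, and you close the induction with Cauchy--Schwarz together with the elementary bound $\alpha_a=\ell_a+\tfrac{1-\ell_a}{n_a}\ge\tfrac{1}{n_a}$, which gives $\sum_a 1/\alpha_a\le n$. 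The paper instead strengthens to the assertion that $A^{T,U}\ge 0$ for every upper subtree $U$, binarizes the tree, and peels off \emph{leaves} from the bottom via explicit row and column elimination (a Schur-complement argument), with the bookkeeping governed by the identity $\langle a+b\rangle=\frac{1-\langle a\rangle\langle b\rangle}{2-\langle a\rangle-\langle b\rangle}$ for $\langle x\rangle=1-\tfrac{1}{x}$. Your top-down argument avoids both the binarization and the auxiliary generalization to upper subtrees, replacing the exact elimination by a one-line Cauchy--Schwarz estimate; the paper's version is more elaborate but delivers the more general statement $A^{T,U}\ge 0$ as a byproduct. One detail to make explicit in a final write-up: when $\ell_a=1$ the rescaling of $T_a$ is undefined (that subtree has radius $0$), but then the $(a,a)$-block of $H$ is exactly $\mathbf{1}_{n_a\times n_a}$ and $\alpha_a=1$, so $1/\alpha_a\le n_a$ still holds; alternatively, induct on the unnormalized claim $H\ge\tfrac{r}{n}\mathbf{1}_{n\times n}$ for radius-$r$ trees to avoid rescaling altogether. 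Your sharpness argument via the star tree is the same as the paper's.
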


Theorem \ref{thm:psd} can be seen as a quantitative refinement of Graham and Pollak's result that $D$ has at most one positive eigenvalue in the ultrametric case. 
In Section \ref{ssec:equality_cases}, we characterize the ultrametric trees for which the scalar $(1 - \frac1n)$ is optimal. 

Theorem~\ref{thm:psd} serves as the base case in our inductive argument for the proof of Theorem~\ref{thm:qLorentzian}.
This connection stems from Lemma~\ref{lem:ultra}, which states that any \Mnat-concave function on $2^E$ whose effective domain contains all the subsets of $E$ with at most $2$ elements gives rise to an ultrametric tree.
This is a variation of the standard fact that the space of uniform valuated matroids of rank 2 is equal to the space of phylogenetic trees \cite[Theorem 4.3.5]{MS15}. 

\subsection*{Acknowledgements}
The authors gratefully acknowledge the Institute for Advanced Study for providing an inspiring research environment.
Federico Ardila--Mantilla was supported by NSF Grant DMS 2154279, the Friends of the Institute for Advanced Study Fund, and a UK Royal Society Wolfson Fellowship. Graham Denham was supported by NSERC of Canada.
Chris Eur was supported by NSF Grant DMS 2246518.
Sergio Cristancho and June Huh were supported by the Simons Investigator Grant.
Botong Wang was supported by NSF Grant DMS 1926686.

Our results and proofs were first  announced in Feb. 2025 at the Institute for Advanced Study \cite{ArdilaIAStalk2, ArdilaIAStalk1}. In Jan. 2026, a few days after our work appeared on the arXiv, Cao, Chen, Li, and Wu \cite{CCLW} announced results that overlap with one of our four main theorems. We thank them for bringing Dowling and Zhao’s conjectures to our attention.

\section{Trees}

\subsection{Ultrametric tree matrices} 

When $T$ is a tree with nonnegative edge lengths, let \(d(i,j)\) denote the
sum of the edge lengths along the unique path in $T$ connecting the vertices $i$ and $j$.
We call $d$ the \emph{tree distance function} for $T$, and we define the \emph{diameter}
of $T$ to be
$\displaystyle \max_{i,j} d(i,j)$.

If $T$ is rooted, the \emph{height} $H(i)$ of a vertex $i$ in $T$ is the
distance from $i$ to its furthest descendant in $T$. The term is chosen to match drawings of rooted trees
with the root at the top. For any distinct vertices $i$ and $j$ in $T$, let
$i \vee j$ denote their \emph{lowest common ancestor}, that is, the unique vertex of $T$ 
lying on all three paths connecting the root, $i$, and $j$ in $T$.

\begin{definition}\label{def:ultrametric}
  An \emph{ultrametric tree} $T$ is a rooted tree with a
  tree distance function for which 
  all leaves have the same distance from the root.  That
  distance is the \emph{radius} of the tree, which is also equal to the height of the root, and half the diameter.
\end{definition}

If $T$ is an ultrametric tree, the function $d$ satisfies the
\emph{ultrametric inequality}
\[
d(i,j)\leq \max\Big\{d(i,k),d(k,j)\Big\} \ \ \text{for any leaves $i,j,k$.}
\]
 It follows that the leaves of $T$ satisfies the \emph{three-point condition}:
\[
  \text{The maximum among $d(i,j),\, d(j,k),\, d(k,i)$ is attained at least twice for any leaves $i,j,k$.}
  \]
Every ultrametric tree with positive lengths defines an ultrametric on its leaves, and every finite ultrametric space arises in this way from an ultrametric tree~\cite[Theorem 3.1]{BG91}.

In this section, we prove the following extension of Theorem~\ref{thm:psd}.  
An \textit{upper subtree} $U$ of $T$ is an upper ideal of $T$ when regarded as a poset, that is, a set of vertices of $T$ such that every ancestor of a vertex in $U$ is also in $U$.

\begin{proposition}\label{prop:psd_general}
	Let $T$ be an ultrametric tree of radius $1$, let $U$ be a nonempty upper subtree of $T$, and let $M=U_{\text{min}}$ be the set of minimal elements of $U$. For each $i$ in $M$, let $n_i$ be the number of leaves of $T$ below $i$, and set $n\coloneqq\sum_{i\in M} n_i$. Then the symmetric matrix $A^{T,U}$ with rows and columns indexed by $M$ and with entries
	\[
	A_{ij}^{T,U} = \begin{cases}
		(1-\frac{1}{n}) - H(i\vee j), &\text{if $i \neq j$,}\\
		(1-\frac{1}{n}) - (1-\frac{1}{n_i}) H(i), &\text{if $i=j$,}
	\end{cases}
	\]
	is positive semidefinite. 
\end{proposition}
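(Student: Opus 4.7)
The plan is to prove Proposition~\ref{prop:psd_general} by induction on $|U|$, observing that $A^{T, U}$ depends on $U$ only through $M = U_{\min}$. As a preliminary reduction, both $A^{T, U}$ and $n$ depend on $T$ only through the subtree spanned by the ancestors of $M$ and the leaves of $T$ below $M$; replacing $T$ by this subtree, I may assume every leaf of $T$ lies below some element of $M$. Any such $U$ is then built up from $\{r\}$ (the root) by a sequence of \emph{expansions}: replacing some non-leaf $m \in M$ by its children $c_1, \ldots, c_k$, i.e., setting $U' = U \cup \{c_1, \ldots, c_k\}$ so that $M' = U'_{\min} = (M \setminus \{m\}) \cup \{c_1, \ldots, c_k\}$. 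The crucial structural point is that the total $n$ is preserved under expansion, since $n_m = \sum_l n_{c_l}$. The base case $U = \{r\}$ gives a $1 \times 1$ matrix with entry $(1-1/n) - (1-1/n)\cdot H(r) = 0$, using $H(r) = 1$ and $n_r = n$.

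For the inductive step, let $P \colon \R^{M'} \to \R^{M}$ be the linear map sending each $e_{c_l}$ to $e_m$ and fixing every other basis vector, and decompose
\[
A^{T, U'} = \tilde{A} + R, \qquad \tilde{A} \coloneqq P^T A^{T, U} P,
\]
so that $\tilde{A}$ is positive semidefinite by the inductive hypothesis. The identities $i \vee c_l = i \vee m$ for $i \in M \setminus \{m\}$ and $c_l \vee c_{l'} = m$ for $l \neq l'$ imply that $R$ vanishes outside the $\{c_1, \ldots, c_k\}$ block, and a direct calculation gives
\[
R\big|_{\{c_l\}} = \diag\bigl(e_l + H(c_l)/n_{c_l}\bigr) - \frac{H(m)}{n_m}\,\1_{k\times k},
\]
where $e_l$ is the length of the edge from $m$ to $c_l$ in $T$.

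A Cauchy--Schwarz argument (equivalently, a Schur complement computation) reduces the positive semidefiniteness of this $k \times k$ matrix to the scalar inequality $\sum_{l=1}^k 1/(e_l + H(c_l)/n_{c_l}) \leq n_m/H(m)$. Substituting the ultrametric identity $H(c_l) = H(m) - e_l$, the left-hand side becomes $\sum_l n_{c_l}/(e_l(n_{c_l} - 1) + H(m))$, and since $n_m = \sum_l n_{c_l}$, this holds term-by-term from $e_l(n_{c_l} - 1) \geq 0$ (the degenerate case $H(m) = 0$ forces all $e_l$ and $H(c_l)$ to vanish, making $R = 0$ trivially). The main obstacle is the detailed bookkeeping in computing $R$ and recognizing that its positive semidefiniteness reduces to an inequality that is sharp yet holds term-by-term; the ultrametric relation $H(c_l) + e_l = H(m)$ is precisely the ingredient that makes this work, and the same relation will ultimately account for the optimality of the constant $(1 - 1/n)$ asserted in Theorem~\ref{thm:psd}.
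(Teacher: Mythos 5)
Your proof is correct, and it takes a genuinely different route from the paper's argument. The paper first reduces to binary trees, then inducts downward on $|V(T)| + |M|$ with a two-case split (a lone leaf of $U$, or a sibling pair), using a PSD-domination step $H(i),H(j) \le H(h)$ followed by a congruence (row/column operations) that peels off a nonnegative scalar and leaves $A^{T,U'}$ as a block. You instead induct upward on $|U|$ by ``expansion'' of a minimal vertex $m$ of $U$ into all its children at once, avoiding the binary reduction and the two-case analysis, and you prove PSD-ness by the additive decomposition $A^{T,U'} = P^T A^{T,U}P + R$ with both summands PSD. The critical ultrametric input is the same in both proofs --- the paper uses the inequality $H(i)\le H(h)$, you use the equality $H(m) = e_l + H(c_l)$ --- but the matrix mechanism is different: congruence reduction versus additive splitting. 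Your reduction of the PSD-ness of $R$ to the term-by-term scalar inequality $n_{c_l}/(e_l(n_{c_l}-1)+H(m)) \le n_{c_l}/H(m)$ is slick and, I think, conceptually a bit cleaner. I verified the entries of $R$ (diagonal $e_l + H(c_l)/n_{c_l} - H(m)/n_m$, off-diagonal $-H(m)/n_m$) and the base case entry $0$; all check out.

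Two small gaps you should fill in a final write-up. First, after pruning $T$ to the subtree spanned by $M$, its ancestors, and the leaves below $M$, you should justify that $U$ is in fact reachable from $\{r\}$ by expansions; concretely, one must verify that when a minimal vertex $m\in U_t\setminus M$ is expanded, \emph{every} child $c$ of $m$ actually lies in $U$. This holds because (after pruning) each such $c$ has some leaf $\ell$ below it, $\ell$ lies below some $m'\in M$, and the total order on the ancestors of $\ell$ together with the minimality of $m'$ in $U$ forces either $c = m'$ or $c$ to be an ancestor of $m'$, hence $c\in U$ in either case. Second, you should note that the pruning preserves the relevant data ($H$-values of ancestors of $M$ and the leaf counts $n_i$), so the statement for the pruned tree really is the statement for the original one. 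Both are routine but worth a sentence each.
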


The first part of Theorem~\ref{thm:psd} is the special case when $U=T$.

\begin{proof}
	We may assume the tree $T$ is binary by replacing any vertex having $k>2$ children with $k-1$ vertices having two children each, connected by edges of length 0; this is illustrated in Figure~\ref{fig:binarize}.
	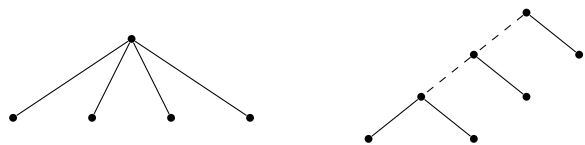
\begin{figure}[h]
	\begin{tikzpicture}[scale=0.7, transform shape,
		smallnode/.style = {circle, fill=black, inner sep=1.5pt},
		arrow/.style = {->, thick, >=Stealth}
		]
		
		\begin{scope}[xshift=7.5cm, level distance=0.8cm, sibling distance=2.5cm]
			
			\node[smallnode] (R1) at (0,0) {};
			\node[smallnode] (R2) at (-1,-0.8) {};
			\node[smallnode] (R3) at (-2,-1.6) {};
			\node[smallnode] (L1) at (-3,-2.4) {};
			\node[smallnode] (L2) at (-1,-2.4) {};
			\node[smallnode] (R4) at (0,-1.6) {};
			\node[smallnode] (R5) at (1,-0.8) {};
			
			\draw[dashed] (R1) -- (R2) node[midway, above] {};     
			\draw[dashed] (R2) -- (R3) node[midway, above] {};     
			\draw (R3) -- (L1);             
			\draw (R3) -- (L2);             
			\draw (R2) -- (R4);             
			\draw (R1) -- (R5);             
			
		\end{scope}
		
		\node[smallnode] (A) at (0,-0.5) {}
		child {node[smallnode] {}}
		child {node[smallnode] {}}
		child {node[smallnode] {}}
		child {node[smallnode] {}};
		
		
	\end{tikzpicture}
        \caption{Non-binary trees can be seen as binary by adding edges of length $0$. \label{fig:binarize}}
	\end{figure}
	
	We prove this statement for all pairs $(T,U)$ by induction on $|V(T)|+|M|$. 
    Notice that when $|M|=1$, the matrix $A^{T,U}$ has a single entry which is nonnegative; this will be the base cases of our induction. When $U$ has $\abs{M} \geq 2$ leaves,  at least one of the following statements is true:
    \begin{enumerate}[(1)]\itemsep 5pt
        \item There is a leaf $i$ of $U$ with no siblings  in $U$.
        \item There are sibling leaves $i,j$ of $U$ with a common parent $h=i\vee j$.
    \end{enumerate}
    
In the first case, let $h$ and $g$ be the parent and grandparent of $i$, respectively; they must both exist since $\abs{M} \geq 2$. Let $T'$ be the smaller tree obtained by removing $h$ and all the descendants of $h$ that are neither $i$ nor descendants of $i$, and replacing edges $(g,h)$ and $(h,i)$ by an edge $(g,i)$ of length $\ell(g,h)+\ell(h,i)$. 
If $U'$ is the corresponding upper subtree of $T'$,  
then  $A^{T,U}=A^{T',U'}$ and the induction hypothesis applies to $(T',U')$.  

In the second case, let $U'$ be the upper subtree $U\backslash\set{i,j}$ of $T$, whose set of minimal elements  $M'=M\backslash\set{i,j} \cup \set{h}$ is smaller. We compare the matrices $A^{T,U}$ and $A^{T,U'}$. 
Since $T$ is binary, $h$ has no other children other than $i$ and $j$, so
\[
i\vee k=j\vee k=h\vee k \ \  \text{for any $k\in M \backslash\set{i,j}$}.
\]
Setting  $\langle a\rangle \coloneqq 1- \frac{1}{a}$ and focusing on the $i$-th and $j$-th rows and columns of the matrix $A^{T,U}$, which are shown as the first and the second rows and columns below,  we have
\begin{eqnarray*}
		A^{T,U} 
		&=& 
		\begin{bmatrix}
			& 
            \angl{n} - \angl{n_i} H(i) &  \angl{n} - H(h) & \angl{n} - H(i \vee k) & \cdots  \\
			& \angl{n} - H(h) &  \angl{n} - \angl{n_j}H(j) & \angl{n} - H(j \vee k) & \cdots \\
			& \angl{n} - H(i \vee k) & \angl{n} - H(j \vee k) & \angl{n} - \angl{n_k}H(k)  & \cdots \\
			& \vdots & \vdots & \vdots & \ddots& 
		\end{bmatrix}
        \text{ since } i \vee j = h
        \\
		&\geq &
		\begin{bmatrix}
			& \angl{n} - \angl{n_i}H(h) &  \angl{n} - H(h) & \angl{n} - H(i \vee k) & \cdots  \\
			&\angl{n} - H(h) & \angl{n} - \angl{n_j}H(h) & \angl{n} - H(j \vee k) & \cdots \\
			& \angl{n} - H(i \vee k) & \angl{n} - H(j \vee k) & \angl{n} - \angl{n_k}H(k)  & \cdots\\
			& \vdots & \vdots & \vdots & \ddots& 
		\end{bmatrix}
		\text{ since } H(i), H(j) \leq H(h)
		\\
		&\sim &
		\begin{bmatrix}
			&  (2-\angl{n_i}-\angl{n_j})H(h) &(\angl{n_j}-1)H(h) & 0 & \cdots  \\
			&  (\angl{n_j}-1)H(h)&  \angl{n} - \angl{n_j}H(h) & \angl{n} - H(h \vee k) & \cdots \\
			& 0 & \angl{n} - H(h \vee k) & \angl{n} - \angl{n_k}H(k)  & \cdots \\
			& \vdots & \vdots & \vdots & \ddots& 
		\end{bmatrix}
		{\setlength\arraycolsep{1.5pt}
		\begin{aligned} 
            &\text{replacing} \\
			&\text{row } i \mapsto  \text{row } i - \text{row } j \\
			&\text{col } i  \mapsto   \text{col } i - \text{col } j, \\
            &\text{using $i\vee k=j\vee k=h\vee k$.} 
		\end{aligned}}
		\\
		&\sim &
		\begin{bmatrix}
			& 
            (2-\angl{n_i}-\angl{n_j})H(h) &  0 & 0 & \cdots  \\
			&  0 &  \angl{n} - \angl{n_h}H(h) & \angl{n} - H(h \vee k) & \cdots \\
			& 0 & \angl{n} - H(h \vee k) & \angl{n} - \angl{n_k}H(k)  & \cdots \\
			& \vdots & \vdots & \vdots & \ddots& 
		\end{bmatrix}
		{\setlength\arraycolsep{1.5pt}
		\begin{aligned}
			&\text{replacing} \\
            &\text{row } j \mapsto  \text{row } j + \frac{n_i}{n_h}\text{row } i \\
			&\text{col } j \mapsto  \text{col } j + \frac{n_i}{n_h}  \text{col } i.
		\end{aligned}}
		\\
		&= &
		 \left[
\begingroup
\renewcommand*{\arraystretch}{1.3}
  \begin{array}{c|c}
     (\frac{1}{n_i}+\frac{1}{n_j})H(h) & 0\\ \hline
    0 & A^{T,U'}\end{array}\endgroup
  \right].
	\end{eqnarray*}
Therefore, $A^{T,U}$ is positive semidefinite by the induction hypothesis. 
The third step creates zeros in the $(i,j)$-th and $(j,i)$-th entries because
	\[
	\frac{n_i}{n_h} = \frac{n_i}{n_i+n_j}=\frac{1-\angl{n_j}}{2-\angl{n_i}-\angl{n_j}}.
	\]
The formula for the $(j,j)$-th entry follows from the following identity for $a=n_i$ and $b=n_j$:
	\begin{equation}\label{eq:<n>}
		\angl{a+b}= \frac{1-\angl{a}\angl{b}}{2-\angl{a}-\angl{b}}. \tag{$*$}
	\end{equation}
	This last step features a pleasant subtlety worth mentioning explicitly: our proof requires a function $\angl{ \cdot } \colon \Z_{>0} \rightarrow \R_{\geq 0}$ that satisfies \eqref{eq:<n>}. A priori, this functional equation would seem overdetermined, since it gives different formulas for $\angl{a+b}$ for different choices of $a$ and $b$. In fact, the solutions to \eqref{eq:<n>} are $\langle x \rangle = 1-\frac{1}{cx}$ for any nonzero $c$, and the smallest positive $c$ such that $1-\frac{1}{cx}$ is nonnegative for all positive $x$ is $1$.
	Thus, the choice of constants $\angl{n_k} = 1 - \frac1{n_k}$ in the matrix $A^{T,U}$
     is exactly what ensures that the inductive machinery operates smoothly.
\end{proof}

\begin{reptheorem}{thm:psd} 
For any ultrametric tree with $n$ leaves whose common distance from the root is $1$, 
\[
\left(1 - \frac{1}{n} \right)  \1_{n\times n}  \ge \frac{1}{2}D.
\]
The constant $(1 - \frac{1}{n})$ is the smallest number that makes the above statement true.
\end{reptheorem}

\begin{proof}
The desired inequality is the statement that $A^{T,T}$ is positive semidefinite. For the second statement notice that the star tree, where the root is the direct parent of all the leaves, has $\frac12D=\1_{n\times n} - I_{n \times n}$. Therefore $\lambda \1_{n \times n} - \frac12 D = I_{n \times n} - (1-\lambda) \1_{n \times n}$, whose eigenvalues are $1, \ldots, 1, 1-(1-\lambda)n$. The smallest $\lambda$ that makes this matrix positive semidefinite is $1 - \frac1n$, as desired.
\end{proof}

\subsection{Equality cases} \label{ssec:equality_cases}

For an ultrametric tree $T$ of radius 1 with $n$ leaves and a nonnegative real
number $c$, we consider the $n\times n$ matrix 
$A(c) = c \, \1_{n\times n} - \frac{1}{2}D.$
The matrix $A(0)$ is not positive semidefinite, while the matrix $A({1-\frac{1}{n}})$ is positive semidefinite by Theorem \ref{thm:psd}.  Since the cone of positive semidefinite matrices is closed, there is a smallest positive constant $c=c_T\leq 1-\frac{1}{n}$ for which the matrix $A(c)$ is positive semidefinite.
When do we have $c_T =1-\frac{1}{n}$?

We saw in the proof of Theorem \ref{thm:psd} that star trees are optimal in this sense. 
It turns out that
they are essentially the only possibility for $c_T = 1-\frac1n$.   In order to
formulate the precise statement, we introduce some auxiliary definitions for ultrametric trees of radius $1$. Since we allow edges with length zero,
the distance function on $T$ may define only a
pseudometric on the leaves. In other words, we may have $d(i,j)=0$ for distinct $i$ and $j$.
\begin{enumerate}[(1)]\itemsep 5pt
    \item We say that $T$ is \textit{leaf-positive} if every edge of $T$ adjacent to a leaf has positive length. Note that this is equivalent to saying that for any vertex $i$ of $T$, $H(i)=0$ if and only if $i$ is a leaf. 
    \item We say that $T$ is a \textit{star-metric} if, for any two leaves $i,j$ of $T$, the height of their lowest common ancestor $H(i\vee j)$ is either 0 or 1. 
\end{enumerate}
The leaf-positive condition implies that the tree distance function is a metric.
The star-metric condition characterizes those ultrametric trees
that give star trees after contracting all length 0 edges and suppressing all non-root degree 2 vertices. 

\begin{proposition}\label{prop:equality_psd}[Equality case of Theorem \ref{thm:psd}] 
	Let $T$ be a leaf-positive ultrametric tree of radius 1.  Then $c_T=1-\frac1n$ if and only if $T$ is a star-metric. 
\end{proposition}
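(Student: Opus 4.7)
The plan is to reformulate the question as a quadratic minimization that decouples at the root, then induct on $|V(T)|$.  Introduce the matrix $K$ with $K_{ii}=1$ and $K_{ij}=d(r,i\vee j)=1-H(i\vee j)$ for $i\ne j$, so that $\tfrac12 D=\mathbf{1}_{n\times n}-K$; for any $v$ with $\sum v_i=1$ this gives $v^T(\tfrac12 D)v=1-v^T K v$ and hence
\[
c_T=\sup_{\sum v_i=1}\bigl(1-v^T K v\bigr)=1-\inf_{\sum v_i=1}v^T K v.
\]
Therefore $c_T=1-\tfrac1n$ if and only if $\inf_{\sum v_i=1} v^T Kv=\tfrac1n$.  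The ``if'' direction is then immediate: in a leaf-positive star-metric, $H(i\vee j)\ne 0$ for distinct leaves forces $H(i\vee j)=1$, so $D_{ij}=2$ for every $i\ne j$ and the star-tree computation at the end of the proof of Theorem~\ref{thm:psd} applies.

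For the converse, I induct on $|V(T)|$.  Let $c_1,\dots,c_s$ be the children of $r$, with $\ell_l=d(r,c_l)$, subtree $T^{(l)}$ of radius $\rho_l=1-\ell_l$, and leaf set $L_l$ of size $n_l$.  Since $i\vee j=r$ whenever $i\in L_l$ and $j\in L_{l'}$ with $l\ne l'$, the matrix $K$ is block-diagonal with blocks $\ell_l\mathbf{1}\mathbf{1}^T+K^{(l)}$, where $K^{(l)}$ is the analogous matrix for $T^{(l)}$.  Writing $v=(v^{(l)})_l$ for the block decomposition and $s_l=\sum_{i\in L_l}v_i$,
\[
v^T K v=\sum_l\bigl(\ell_l s_l^{\,2}+(v^{(l)})^T K^{(l)} v^{(l)}\bigr).
\]
Each $T^{(l)}$ is leaf-positive with strictly fewer vertices, so after rescaling to radius~$1$ the inductive hypothesis gives, for the corresponding infimum $\alpha_l$ of $(v')^T K^{(l)} v'$ subject to $\mathbf{1}^T v'=1$,
\[
\alpha_l\;\ge\;\tfrac{\rho_l}{n_l},
\]
with equality if and only if $T^{(l)}$ is itself a star-metric; hence, by homogeneity, $\inf\{(v^{(l)})^T K^{(l)} v^{(l)}\,:\,\mathbf{1}^T v^{(l)}=s_l\}=s_l^{\,2}\alpha_l$.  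Setting $\beta_l=\ell_l+\rho_l/n_l=(1+\ell_l(n_l-1))/n_l$, Cauchy--Schwarz on the outer problem yields
\[
\inf_{\sum s_l=1}\sum_l\beta_l s_l^{\,2}=\Bigl(\sum_l 1/\beta_l\Bigr)^{-1}\ge\tfrac1n,
\]
where the last inequality uses $1/\beta_l=n_l/(1+\ell_l(n_l-1))\le n_l$.

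Both estimates saturate precisely when \textbf{(a)} each $T^{(l)}$ is a star-metric and \textbf{(b)} $\ell_l(n_l-1)=0$ for every $l$, i.e., $\ell_l=0$ whenever $n_l\ge 2$.  To close the induction, check that (a)+(b) is equivalent to $T$ being a star-metric: pairs of leaves in different subtrees of the root automatically satisfy $H(i\vee j)=1$; for distinct $i,j\in L_l$ with $n_l\ge 2$, leaf-positivity forces $H_{T^{(l)}}(i\vee j)>0$, so the condition $H_{T^{(l)}}(i\vee j)\in\{0,1\}$ becomes $H_{T^{(l)}}(i\vee j)=1$, which in turn requires $\rho_l=1$ (hence $\ell_l=0$) together with $T^{(l)}$ being a star-metric.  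Combined with the reformulation from the first paragraph, this completes the induction.  The main obstacle is coupling the two a priori independent inequalities---the inductive bound $\alpha_l\ge\rho_l/n_l$ and the elementary $1/\beta_l\le n_l$---so that their simultaneous saturation encodes precisely leaf-positive star-metricity at the root; leaf-positivity is essential, since it prevents a subtree $T^{(l)}$ from ``faking'' $H(i\vee j)=0$ for distinct leaves.
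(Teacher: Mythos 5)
Your proof is correct, and it takes a genuinely different route from the paper's. The paper obtains Proposition~\ref{prop:equality_psd} by retracing, step by step, the ``peel off a pair of sibling leaves'' induction from the proof of Proposition~\ref{prop:psd_general}, tracking when each positive-semidefiniteness comparison in that chain degenerates to an equality. You instead reformulate $c_T$ as a constrained quadratic minimum, $c_T = 1 - \inf_{\mathbf{1}^Tv=1} v^TKv$ for the ``complementary'' matrix $K=\mathbf{1}\mathbf{1}^T-\tfrac12 D$, observe that $K$ is block-diagonal with one block per child of the root (the key point being that $K_{ij}=1-H(r)=0$ across blocks), and then split the optimization into an outer problem over the block sums $s_l$ and inner subtree problems. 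The inner bounds come from induction, and the outer bound is a one-line Cauchy--Schwarz, $\inf_{\sum s_l=1}\sum_l\beta_l s_l^2=(\sum_l 1/\beta_l)^{-1}$, whose tightness is easy to analyze. This top-down decomposition at the root is cleaner and more conceptual than retracing the bottom-up binary-reduction argument, and as a byproduct it re-derives the inequality $c_T\le 1-\tfrac1n$ of Theorem~\ref{thm:psd} itself by the same induction; in effect you have found an independent proof of the whole of Theorem~\ref{thm:psd} plus its equality characterization. Two small points worth tightening in a final write-up: (i) the reformulation $c_T=1-\inf_{\mathbf{1}^Tv=1}v^TKv$ implicitly uses that $v^TKv\ge 0$ on $\{\mathbf{1}^Tv=0\}$, which you should either cite from Theorem~\ref{thm:psd} or fold into the simultaneous induction; and (ii) the invocation of the inductive hypothesis on the rescaled subtree $T^{(l)}/\rho_l$ degenerates when $\rho_l=0$, but leaf-positivity forces $n_l=1$ in that case, where $\alpha_l=\rho_l/n_l$ and the star-metric condition hold vacuously, so the conclusion survives. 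Also, strictly speaking, your step deducing $\alpha_l=\rho_l/n_l$ for every $l$ from tightness of $\inf\sum(\ell_l+\alpha_l)s_l^2\ge\inf\sum\beta_l s_l^2$ deserves one sentence: since $(\sum_l 1/\gamma_l)^{-1}$ is strictly increasing in each $\gamma_l>0$, equality of the two infima forces $\ell_l+\alpha_l=\beta_l$ termwise. With these small clarifications, your argument is a complete and attractive alternative to the paper's.
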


The proof is obtained by carefully retracing the proof of Proposition~\ref{prop:psd_general}.\footnote{For details see  \url{http://sergiocs147.github.io/files/ACDEHW-equality.pdf}.} 
Proposition ~\ref{prop:equality_psd}
will not be used in  the rest of the paper.

\section{Lorentzian polynomials from trees}

\subsection{Lorentzian polynomials and discrete convex analysis}\label{SectionLorentzian}

Let $\HH^d_n$ be the space of degree $d$ homogeneous polynomials in $n$
variables $\x=(x_i)_{i \in E}$ with real coefficients, equipped with the usual topology of a finite-dimensional real vector space. 
We write $\partial_i$ for the differential operator $\frac{\partial}{\partial x_i}$ for $i\in E$, and set
\[
\partial^\alpha \coloneqq \prod_{i\in E} \partial_i^{\alpha_i}  \ \ \text{for any $\alpha=(\alpha_i)_{i \in E}\in \Z^E_{\geq 0}$}.
\]
In degree $2$, the set of \emph{strictly Lorentzian polynomials} $\strL_n^2\subseteq \HH^2_n$
is, by definition, the set of quadratic forms $f$ whose Hessian is entrywise positive and has 
Lorentzian signature: 
\[
\strL^2_n \coloneqq \set{f\in \HH^2_n  \;\middle|\; \left(\partial_i \partial_j f\right)_{i,j \in E} \text{~has only positive entries and  signature}  (+, - \cdots, -)}.
\]
 In degree $d>2$, we
define the set of \emph{strictly Lorentzian polynomials} $\strL_n^d\subseteq \HH^d_n$ recursively by setting
\[
\strL^d_n \coloneqq \set{f\in \HH^d_n  \;\middle|\;  \partial_i f\in \strL_n^{d-1}\text{~for all
    $i \in E$}}.
\]
The set of  \emph{Lorentzian polynomials} $\LL_n^d$
is defined to be the closure of $\strL_n^d$ in $\HH^d_n$. 
Here we collect some properties of Lorentzian polynomials relevant to this paper: 

\begin{enumerate}[(1)]\itemsep 5pt
    \item If $f(\x)\in \LL^d_n$ and $A$ is an $n\times m$ nonnegative matrix, then $f(A\y) \in \LL^d_m$ \cite[Theorem 2.10]{BH20}.
    \item If $f\in \LL_n^d$, then $\sum_i a_i \partial_i f \in \LL_n^{d-1}$ for any $a_i \ge 0$ \cite[Corollary 2.11]{BH20}.
    \item If $f\in \LL_n^d$ and $g\in \LL_n^e$, then $fg\in \LL^{d+e}_n$ \cite[Corollary 2.32]{BH20}.
    \item A degree $d$ bivariate polynomial $f(x,y) =  \sum\limits_{k=0}^d a_k x^ky^{d-k}$ 
is Lorentzian if and only if 
\[
(\partial_x^d f, \ldots,\partial_x^{d-k}\partial_y^{k}f,\ldots,\partial_y^d f) = 
\left(\frac{a_0}{\binom{d}{0}}, \ldots, \frac{a_k}{\binom{d}{k}}, \ldots \frac{a_d}{\binom{d}{d}}\right)
\]
is a log-concave sequence of nonnegative numbers with no internal zeros \cite[Example 2.3]{BH20}. 
\end{enumerate}

We briefly review the theory of Lorentzian polynomials and its relation to  discrete convex analysis.
For further details, we refer to \cite{BH20} for Lorentzian polynomials and \cite{Mu03} for discrete convex analysis. 
We consider 
 the rank $d$ simplex in $\Z_{\ge 0}^E$ defined by
\[
\Delta_n^d \coloneqq  \set{\alpha \in \Z_{\geq0} ^{E} \;\middle|\;  \alpha_1 + \dotsb + \alpha_n = d}.
\] 
For $i \in E$, we write $e_i$ for the $i$-th standard basis vector of $\Z^{E}$. 
An \emph{M-convex set} of rank $d$ on $E$ is a subset $\mathrm{J}\subseteq \Delta_n^d$ satisfying the \emph{symmetric exchange property}: 
\begin{quote}
For any $\alpha, \beta \in \mathrm{J}$ and $i \in E$ such that $\alpha_i > \beta_i$, there is $j \in E$ such that $\beta_j >\alpha_j$ for which both $\alpha + e_j -e_i$ and $\beta +e_i-e_j$ belong to $\mathrm{J}$. 
\end{quote}
A function $\nu\colon \Delta_n^d \to \R \cup \{\infty\}$ is said to be \emph{M-convex} if, 
for any $\alpha, \beta \in \Delta_n^d$ and $i\in E$ such that $\alpha_i > \beta_i$, there is $j\in E$ such that $\beta_j >\alpha_j$ and
\[
\nu(\alpha) + \nu(\beta)\ge \nu(\alpha + e_j -e_i) + \nu(\beta +e_i-e_j).
\]
Equivalently, a set of lattice points is $M$-convex if and only if it is the set of lattice points in an integral generalized permutahedron, and a function is $M$-convex if and only if it induces a regular subdivision of an integral generalized permutahedron into integral generalized permutahedra.

We say that $\nu$ is \emph{M-concave} if $-\nu$ is M-convex.
The \emph{effective domain} of an M-convex function is 
\[
\dom(\nu)\coloneqq \set{\alpha\in \Delta_n^d  \;\middle|\;   \nu(\alpha) \neq \infty},
\]
and the effective domain of an M-concave function is defined similarly. 
Note that the effective domains of M-convex functions and M-concave functions are M-convex.
The \emph{support} of  a degree $d$ homogeneous polynomial in $n$ variables $f=\sum_{\alpha \in \Delta_n^{d}} c_\alpha \x^\alpha$ is defined by
\[
\supp(f) \coloneqq \set{ \alpha \in \Delta_n^{d} \;\middle|\; c_\alpha \neq 0 }.
\]
The following characterization of Lorentzian polynomials is central to their relationship to matroids and M-convexity \cite[Theorem 2.25]{BH20}.

\begin{theorem}\label{thm:Lorentzian}
The following conditions are equivalent for $f \in \HH^d_n$:
    \begin{enumerate}[(1)]\itemsep 5pt
        \item The polynomial $f$ is Lorentzian.
        \item The support of $f$ is an M-convex set and, for all $\alpha \in \Delta_n^{d-2}$, the Hessian of ${\partial^\alpha f}$ has only nonnegative entries and has at most one positive eigenvalue.
    \end{enumerate}
\end{theorem}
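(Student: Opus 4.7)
Proof plan. My approach leverages the recursive definition of Lorentzian polynomials together with the characterization of $\LL_n^2$ as the closure of $\strL_n^2$. Since $f \in \LL_n^d$ iff $f$ is a limit of polynomials $f_\varepsilon \in \strL_n^d$, and $f_\varepsilon \in \strL_n^d$ iff every $(d-2)$-order partial of $f_\varepsilon$ lies in $\strL_n^2$, the theorem amounts to identifying which $f$ belong to this closure in terms of pointwise combinatorial-analytic data.

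For the direction $(1) \Rightarrow (2)$, I would argue as follows. Taking limits through partial differentiation (which is continuous) shows that every $(d-2)$-partial of $f \in \LL_n^d$ lies in $\LL_n^2$. A direct linear-algebra computation identifies $\LL_n^2$ with precisely the degree-2 forms whose Hessian has nonnegative entries and at most one positive eigenvalue, yielding the Hessian condition in (2). For M-convexity of $\supp(f)$, I would induct on $d$. Note that $\supp(\partial_i f) = \set{\alpha - e_i \colon \alpha \in \supp(f), \, \alpha_i > 0}$, and $\partial_i f \in \LL_n^{d-1}$ has M-convex support by the inductive hypothesis. One then checks that the symmetric exchange property for $\supp(f)$ follows by combining exchanges in the slices, with the base case $d=2$ handled directly: any two monomials $\x^\alpha, \x^\beta \in \supp(f)$ with $\alpha_i > \beta_i$ force, via the nonnegative off-diagonal Hessian entries and the at-most-one-positive-eigenvalue condition, the existence of the required $j$.

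For the direction $(2) \Rightarrow (1)$, the plan is a perturbation argument exhibiting $f$ as a limit of elements of $\strL_n^d$. The base case $d=2$ is short: adding a small multiple of $(x_1+\cdots+x_n)^2$ makes every Hessian entry strictly positive and adds a rank-one positive term, which by a Courant--Fischer interlacing argument preserves Lorentzian signature for small $\varepsilon > 0$. For $d>2$, I would take $f_\varepsilon = f + \varepsilon h$, where $h$ is supported on the M-convex hull of $\supp(f)$ and is chosen so that every $(d-2)$-partial of $f_\varepsilon$ becomes strictly Lorentzian simultaneously. A natural candidate for $h$ is the indicator generating polynomial of an M-convex set containing $\supp(f)$, which can be shown to lie in $\LL_n^d$ by a separate induction (using the degree-2 base case together with closure of Lorentzian polynomials under products and nonnegative-matrix substitutions recalled in Section~\ref{SectionLorentzian}).

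The main obstacle is exactly the consistency required in the inductive step of $(2) \Rightarrow (1)$: a single perturbation $h$ of $f$ must compatibly strengthen every $(d-2)$-partial's Hessian at once, since one cannot independently perturb different partials. The M-convexity of $\supp(f)$ is what makes this compatible: without it, the combinatorial shape of $\supp(f)$ would force some partial to violate either nonnegativity of Hessian entries or the one-positive-eigenvalue condition after any perturbation, contradicting $f \in \LL_n^d$. I therefore expect the cleanest route to proceed by first establishing that the indicator polynomial of every M-convex set is Lorentzian, which supplies a canonical strengthening perturbation and reduces the remaining content to a routine continuity check on eigenvalues and matrix entries.
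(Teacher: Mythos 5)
The paper does not prove this theorem: it is quoted verbatim from Br\"and\'en--Huh as \cite[Theorem 2.25]{BH20}, so there is no proof in the paper to compare against. Your proposal is an attempt to reprove that cited result, and it contains a genuine gap.

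The problem is in the $(2)\Rightarrow(1)$ direction, already at the $d=2$ base case. You propose to add a small multiple of $(x_1+\cdots+x_n)^2$ and invoke Courant--Fischer interlacing to conclude that the Lorentzian signature is preserved. Interlacing gives the \emph{wrong} inequality here: if $A$ has eigenvalues $\lambda_1 > 0 = \lambda_2 = \cdots$ and $B = vv^{\mathsf T}$ is rank-one PSD, then interlacing only says $\lambda_2(A) \le \lambda_2(A+\varepsilon B) \le \lambda_1(A)$, which permits (and generically produces) a \emph{second} positive eigenvalue. Concretely, take $f = x_1x_2$ as a quadratic in three variables, so $\mathrm{Hess}(f) = A$ with $A_{12}=A_{21}=1$ and all other entries $0$. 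Its support is the singleton $\{e_1+e_2\}$, which is $M$-convex, $A$ has nonnegative entries, and its eigenvalues are $1,-1,0$, so condition (2) holds. But first-order perturbation theory for the kernel vector $e_3$ gives $\lambda_3(A+\varepsilon\,\1\1^{\mathsf T}) \approx \varepsilon > 0$, so the perturbed Hessian has signature $(+,+,-)$ and the perturbed form is \emph{not} strictly Lorentzian. (One \emph{can} exhibit $x_1x_2$ as a limit of strictly Lorentzian forms, but only by choosing the diagonal entry $\varepsilon'$ much smaller than the square of the off-diagonal entries $\varepsilon$, so that the second-order correction $-2\varepsilon^2$ dominates $\varepsilon'$; this fine-tuning is precisely where the real content of the theorem lives and is not produced by your perturbation.)

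The same difficulty propagates to the inductive step for $d>2$: you need a \emph{single} perturbation $h$ making every $(d-2)$-th partial strictly Lorentzian at once, but as the example shows, adding any fixed Lorentzian $h$ with full support can push zero eigenvalues of the various Hessians in the wrong direction. The actual Br\"and\'en--Huh proof does not proceed by a single explicit perturbation; it introduces the class of polynomials satisfying condition (2), proves that this class is closed, shows it is connected, and then uses a more delicate local-structure argument (together with properties of stable polynomials and the topology of $M$-convex sets) to conclude it coincides with the closure of $\strL^d_n$. Your $(1)\Rightarrow(2)$ direction is mostly fine in outline, but the sketch of why $\supp(f)$ must be $M$-convex (the ``combine exchanges in the slices'' step) is also left at a level of detail that would need substantial work, as this is one of the nontrivial parts of \cite[Theorem 2.25]{BH20}.
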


From this, one may deduce the following characterizations of M-convex functions \cite[Theorem 3.14]{BH20}: A function $\nu:\Delta^d_n \to \mathbb{R} \cup \{\infty\}$ is M-convex if and only if its \emph{normalized generating polynomial}
\[
f_{q,\nu}(\x) \coloneqq \sum_{\alpha \in \dom(\nu)} q^{\nu(\alpha)}\frac{\x^\alpha}{\alpha!}
\]
is a Lorentzian polynomial for all positive $q \le 1$, where 
${\x^\alpha}/{\alpha!}$ is the product of ${x_i^{\alpha_i}}/{\alpha_i!}$ for $i\in E$. 
This in turn implies the following characterization of M-convex sets \cite[Theorem 3.10]{BH20}: A subset $\mathrm{J}$ of $\Delta^d_n$ is M-convex if and only if its normalized generating polynomial
\[
f_\mathrm{J} \coloneqq \sum_{\alpha \in \mathrm{J}}\frac{\x^\alpha}{\alpha!}
\]
is a Lorentzian polynomial. For example, a collection of $d$-element subsets of $E$ is the set of bases of a matroid on $E$ if and only if its generating polynomial is Lorentzian.

\begin{remark}
The above characterization of M-convex functions  specializes to the following characterization of \Mnat-concave functions on $2^E$: A function $\nu:2^E \to \mathbb{R} \cup\{-\infty\}$ is \Mnat-concave if and only if the \emph{normalized homogeneous generating polynomial}
\[
N(Z_{q,\nu})\coloneqq \sum_{S \subseteq E} q^{-\nu(S)} \x^S \frac{y^{\abs{E \setminus S}}}{\abs{E \setminus S}!}
\]
is a Lorentzian polynomial for all positive $q \le 1$, where $y$ is a homogenizing variable different from $x_i$ for $i \in E$. 
Identifying the variables $x_i$ with each other, this characterization gives the property (\ref{eq:mediumMason}) for $I_{q,\nu;k}$.
It is interesting to compare this characterization of \Mnat-concave functions on $2^E$ with that in Theorem~\ref{thm:qLorentzian}, which implies  the property (\ref{eq:strongMason}) for $I_{q,\nu;k}$. 
Note that, in general, if we omit any normalizing factor $\frac{1}{\alpha_i!}$ from the normalized generating polynomial of an M-convex function, we do not get a Lorentzian polynomial. For example, among the three bivariate quadratic forms
\[
\frac{x_1^2}{2} +x_1x_2+\frac{x_2^2}{2}, \quad
x_1^2 +x_1x_2+\frac{x_2^2}{2}, \quad
x_1^2 +x_1x_2+x_2^2, 
\]
only the first is a Lorentzian polynomial.
\end{remark}

\subsection{Lorentzian property of the homogeneous generating polynomial $Z_{q,\nu}$}\label{sec:genpoly}
The following lemma is a variation of known relations between tree metrics, ultrametrics, and valuated matroids. See, for instance, \cite[Section 4.3]{MS15}.

\begin{lemma}\label{lem:ultra}
Let $\nu$ be an \Mnat-concave function on $2^E$ whose effective domain contains all subsets of $E$ with at most $2$ elements. Then,  for any $0<q\leq 1$, the function
  \[
  d(i,j)\coloneqq 
  \begin{cases}
  2q^{-\nu(i,j)+\nu(i)+\nu(j)-\nu(\varnothing)}, & \text{if $i\neq j$,}\\
  0, & \text{if $i=j$,}
  \end{cases}
  \]
  defines an ultrametric on $E$ of radius $\le 1$. 
\end{lemma}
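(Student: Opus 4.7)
The plan is to derive both the bound $d(i,j) \le 2$ and the ultrametric inequality from two simple applications of the M$^\natural$-exchange property, then invoke the standard realization of finite ultrametrics as ultrametric trees.

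To bound the diameter, I apply the exchange property with $I_1 = \{i,j\}$, $I_2 = \varnothing$, and $i_1 = i$ for any distinct $i, j \in E$. Since $I_2 \setminus I_1$ is empty, alternative (2) of the exchange property is vacuous and alternative (1) must hold:
\[
\nu(\{i,j\}) + \nu(\varnothing) \le \nu(\{j\}) + \nu(\{i\}).
\]
This says the exponent $-\nu(\{i,j\}) + \nu(\{i\}) + \nu(\{j\}) - \nu(\varnothing)$ is nonnegative. Since $q^x$ is non-increasing in $x$ for $0 < q \le 1$, it follows that $d(i,j) \le 2$.

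For three distinct $i,j,k \in E$, I apply the exchange property with $I_1 = \{i,j\}$, $I_2 = \{k\}$, and $i_1 = i$. The two alternatives read
\begin{align*}
\text{(a)}\quad \nu(\{i,j\}) + \nu(\{k\}) &\le \nu(\{j\}) + \nu(\{i,k\}),\\
\text{(b)}\quad \nu(\{i,j\}) + \nu(\{k\}) &\le \nu(\{j,k\}) + \nu(\{i\}).
\end{align*}
Writing the exponent in the definition of $d(a,b)$ as $g(a,b)$, a direct rearrangement shows (a) is equivalent to $g(i,j) \ge g(i,k)$ and (b) is equivalent to $g(i,j) \ge g(j,k)$. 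Since $0 < q \le 1$ reverses direction under $x \mapsto q^x$, these translate to $d(i,j) \le d(i,k)$ and $d(i,j) \le d(j,k)$ respectively, so in either case $d(i,j) \le \max\{d(i,k), d(j,k)\}$. Applying the same argument to the other two orderings of the triple yields the full ultrametric inequality. By the standard correspondence \cite[Theorem 3.1]{BG91} between finite ultrametrics and ultrametric trees, $d$ is then realized by an ultrametric tree of radius $\tfrac{1}{2}\max_{i,j} d(i,j) \le 1$.

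The main obstacle is purely bookkeeping: correctly matching which alternative of the exchange property corresponds to which ordering of $d$-values, and remembering the sign flip caused by $0 < q \le 1$ in the exponentiation. No additional structural input is required beyond the definition of M$^\natural$-concavity itself.
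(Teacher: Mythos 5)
Your proof is correct and follows essentially the same route as the paper: reduce the claim to the three-point condition for the exponent $g(i,j) = -\nu(i,j)+\nu(i)+\nu(j)-\nu(\varnothing)$ and then use that $x\mapsto q^x$ is non-increasing for $0<q\le 1$. The only difference is that the paper obtains the three-point condition by citing the local exchange property \cite[Theorem 6.4]{Mu03}, whereas you re-derive it directly from the definition of \Mnat-concavity by applying the exchange axiom to $I_1=\{i,j\}$, $I_2=\{k\}$; since you only need the easy direction (global exchange implies local exchange), your version is slightly more self-contained at the cost of some bookkeeping, but there is no substantive difference.
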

\begin{proof}
  By the local exchange property for \Mnat-concave functions \cite[Theorem 6.4]{Mu03}, for any distinct elements $i,j,k$ in $E$, the maximum among the three numbers
  \[
    \nu(j,k)+\nu(i), \quad \nu(i,k)+\nu(j), \quad \nu(i,j)+\nu(k),
  \]
  is achieved at least twice. Thus, the maximum among the three numbers
  \[
    \nu(j,k) -\nu(j)-\nu(k) + \nu(\varnothing)\, \quad \nu(i,k) -\nu(i)-\nu(k) + \nu(\varnothing), \quad \nu(i,j) -\nu(i)-\nu(j) + \nu(\varnothing),
  \]
  is achieved at least twice as well. Since $0 < q \leq 1$, we get that $d$ is an ultrametric on $E$. 
  Also, the exchange property for $\nu$ gives
  \[
\nu(i,j)+\nu(\varnothing) \le \nu(i)+\nu(j) \ \ \text{for any distinct $i, j$ in $E$.}
  \]
which implies that $d$ has radius $\le 1$.
\end{proof}

We now prove Theorem~\ref{thm:qLorentzian} and deduce Theorem~\ref{thm:valuated}.
\begin{reptheorem}{thm:qLorentzian}
A function $\nu: 2^{E} \to \mathbb{R} \cup \{-\infty\}$ is \Mnat-concave if and only if its homogeneous generating polynomial
$Z_{q,\nu}$ is a Lorentzian polynomial for all positive $q \le 1$.
\end{reptheorem}

\begin{proof}
The ``if'' direction is \cite[Proposition 5.5]{EH20}. We show that $Z_{q,\nu}$ is Lorentzian for positive $q \le 1$ when $\nu$ is \Mnat-concave.

We observe that any \Mnat-concave function on $2^E$ can be approximated by a sequence of \Mnat-concave functions whose effective domains are $2^E$. More precisely, if $\nu$ is such an \Mnat-concave function, then there is a sequence \Mnat-concave functions $\nu_k$ with $\dom (\nu_k)=2^{E}$ such that 
\[
\lim_{k\to \infty} \nu_k(S) = \nu(S) \ \ \text{for every $S \subseteq E$.}
\]
This is a special case of~\cite[Lemma 3.27]{BH20}, because 
restrictions of M-concave functions to coordinate half-spaces are M-concave \cite[Section 6.4]{Mu03}.
 Since $\mathrm{L}^d_n$ is closed, we may assume that $0<q<1$, and for any such $q$, we have
\[
 \lim\limits_{k\to \infty} Z_{q,\nu_k} = Z_{q,\nu}.
\]
Therefore, we may assume that $0<q<1$ and the effective domain of $\nu$ is $2^{E}$.

Since the support of $Z_{q,\nu}$ is M-convex, by Theorem~\ref{thm:Lorentzian}, it is enough to 
show that all the $n+1$ partial derivatives of $Z_{q,\nu}$ are Lorentzian.
For any $i \in E$, we have
\[
\frac{\partial}{\partial x_i}Z_{q,\nu}=Z_{q,\nu/i},
\]
where $\nu/i$ is the \Mnat-concave function on $2^{E \setminus i}$ defined by $\nu /i (S)=\nu(S \cup i)$. For general discussion about the \emph{contraction} $\nu /i$ of an \Mnat-concave function $\nu$, see \cite[Section 6.4]{Mu03}. Thus, modulo induction on $n$, we only need to consider the Lorentzian property of the quadratic form
  \[
    \left(\frac{\partial}{\partial y}\right)^{n-2}Z_{q,\nu} 
      =(n-2)!\left[ \frac{n(n-1)}{2} q^{-\nu(\varnothing)} y^2+(n-1)\sum_{i \in \binom{E}{1}}q^{-\nu(i)}
        x_i y+\sum_{ij \in \binom{E}{2}}q^{-\nu(i,j)}x_ix_j \right].
        \]
  Up to positive rescaling of rows and columns, the Hessian of this quadratic form is 
  \[
  A(\nu)\coloneqq \left[
\begingroup
\renewcommand*{\arraystretch}{1.3}
  \begin{array}{c|ccc}
    n/(n-1)q^{-\nu(\varnothing)} & q^{-\nu(1)} & \cdots & q^{-\nu(n)}\\ \hline
    q^{-\nu(1)} & 0 & \cdots & q^{-\nu(i,j)}\\
      \vdots & & \ddots & \\
      q^{-\nu(n)} & q^{-\nu(i,j)} & & 0
  \end{array}
\endgroup
  \right].
  \]
Our goal is to show that $A(\nu)$ has at most one positive eigenvalue for any positive $q \le 1$.
  Computing the Schur complement with respect to the block structure indicated above, we may block diagonalize $A(\nu)$ to
  \[
  \left[
\begingroup
\renewcommand*{\arraystretch}{1.3}
  \begin{array}{c|c}
    n/(n-1)q^{-\nu(\varnothing)} & 0\\ \hline
    0 & B(\nu)\end{array}\endgroup
  \right],
  \]
  where $B(\nu)$ is the symmetric matrix with rows and columns labeled by $E$ and with entries
  \[
  B(\nu)_{ij} = \begin{cases} \frac{1-n}n
    q^{\nu(\varnothing)-\nu(i)-\nu(j)}+q^{-\nu(ij)}, & \text{if $i\neq j$,}\\
    \frac{1-n}nq^{\nu(\varnothing)-2\nu(i)}, & \text{if $i=j$.}
  \end{cases}
  \]
Thus, it is enough to show that $B(\nu)$ is negative semidefinite for positive $q \le 1$. 
  Rescaling rows and columns of $B(\nu)$, we obtain a matrix $C(\nu)$  with entries
  \[
  C(\nu)_{ij} = \begin{cases}
    - (1-\frac{1}{n})+q^{-\nu(i,j)+\nu(i)+\nu(j)-\nu(\varnothing)}, & \text{if $i\neq j$,}\\
    - (1-\frac{1}{n}), & \text{if $i=j$.}
  \end{cases}
  \]
  By Lemma~\ref{lem:ultra}, 
$d(i,j)=2q^{-\nu(i,j)+\nu(i)+\nu(j)-\nu(\varnothing)}$ is an ultrametric on $E$. Since $d$ has radius $\le 1$ by the same lemma, Theorem~\ref{thm:psd} implies that $C(\nu)$ is negative semidefinite for positive $q \le 1$.
This implies the same for $B(\nu)$, and hence
 $A(\nu)$ 
has at most one positive eigenvalue for positive $q\le 1$. This finishes the proof that $Z_{q,\nu}$ is Lorentzian for positive $q \le 1$ when $\nu$ is \Mnat-concave.
\end{proof}

\begin{reptheorem}{thm:valuated}
For any \Mnat-concave function $\nu\colon 2^{E} \to \mathbb{R} \cup \{-\infty\}$,  we have
\[
I_{q,\nu;k}^2 \geq \Big(1+\frac1k\Big)\Big(1+\frac1{n-k}\Big) I_{q,\nu;k-1}
    I_{q,\nu;k+1} \ \ \text{for  all $0<k<n$ and $0<q\leq 1$.}
\]
\end{reptheorem}

\begin{proof}
Identifying the variables $x_i$ with each other in $Z_{q,\nu}(\x,y)$, we get a bivariate polynomial with coefficients $I_{q,\nu;k}$.
Since $Z_{q,\nu}$ is Lorentzian by Theorem~\ref{thm:qLorentzian}, the specialization is Lorentzian as well \cite[Theorem 2.10]{BH20}, and hence the sequence $I_{q,\nu;k}$ is ultra log-concave in $k$ for positive $q \le 1$ \cite[Example 2.3]{BH20}.
\end{proof}

\subsection{Polynomial log concavity}\label{sec:polynomial}

We prove Theorem~\ref{thm:qpolynomial} using the following observation.

\begin{lemma}\label{lem:coeffLorentzian}
Let $f$ be a homogeneous polynomial of degree $d$ in $n$ variables \[
    f(\x) = f_0(x_2,\dots,x_n) + x_1 f_1(x_2,\dots,x_n) + \dots + x_1^d f_d(x_2,\dots,x_n).
    \]
If $f$ is Lorentzian, then $f_i$ is Lorentzian for each $0\leq i \leq d$.
\end{lemma}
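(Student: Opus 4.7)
The plan is to extract each slice $f_i$ from $f$ by applying two operations that both preserve the Lorentzian property: differentiating $i$ times with respect to $x_1$, then specializing $x_1 = 0$. Concretely, since
\[
\partial_{x_1}^i f \;=\; \sum_{j\ge i} j(j-1)\cdots(j-i+1)\, x_1^{j-i}\, f_j(x_2,\dots,x_n),
\]
setting $x_1 = 0$ yields $i!\, f_i(x_2,\dots,x_n)$. Thus it suffices to show that $\partial_{x_1}^i f$ is Lorentzian and that substituting $x_1 = 0$ preserves the Lorentzian property.

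For the differentiation step, I would apply property~(2) from Section~\ref{SectionLorentzian} (with the coefficient vector $a = e_1$) iteratively: since $f \in \LL_n^d$, we obtain $\partial_{x_1} f \in \LL_n^{d-1}$, then $\partial_{x_1}^2 f \in \LL_n^{d-2}$, and so on down to $\partial_{x_1}^i f \in \LL_n^{d-i}$. For the specialization step, I would invoke property~(1) with the nonnegative $n \times n$ matrix $A$ that is the identity with its $(1,1)$ entry replaced by $0$; then $g(\x) \coloneqq g(A\x) = g(0,x_2,\dots,x_n)$ for any $g \in \HH_n^{d-i}$, and the property guarantees that Lorentzianness is preserved. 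Applying this to $g = \partial_{x_1}^i f$ shows that $i!\, f_i$, viewed as an element of $\HH_n^{d-i}$ independent of $x_1$, is Lorentzian. Finally, rescaling by the positive constant $1/i!$ preserves the Lorentzian property, and $f_i$ may equivalently be regarded as a polynomial in $n-1$ variables (the support condition and Hessian conditions in Theorem~\ref{thm:Lorentzian} are insensitive to the extra inert variable).

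There is no serious obstacle here; the lemma is essentially a direct combination of the two closure properties recorded at the start of Section~\ref{SectionLorentzian}. The only mild point to verify is that substitution $x_1 = 0$ really is realized by a nonnegative linear change of variables, which is immediate from the matrix $A$ above.
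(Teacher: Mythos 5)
Your proof is correct and is essentially identical to the paper's: it differentiates $i$ times with respect to $x_1$ via \cite[Corollary~2.11]{BH20} and then specializes $x_1=0$ via \cite[Theorem~2.10]{BH20} to recover $i!\,f_i$. The paper states this in one line; you have merely spelled out the nonnegative matrix $A$ realizing the specialization, which is a fine level of detail.
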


\begin{proof}
The polynomial $ \partial_1^i f$ is Lorentzian \cite[Corollary 2.11]{BH20}, and hence its specialization $i! f_i$ is Lorentzian as well  \cite[Theorem 2.10]{BH20}.
\end{proof}

For any $\nu\colon 2^{E}\to \R \cup \{-\infty\}$, nonnegative integers $a$ and $b$, and a multiset $S$ on $E$, we set
 \[
 N^{S}_{q,\nu}(a,b)  \coloneqq  \sum_{|A|=a,|B|=b, S=A+B} q^{-\nu(A)-\nu(B)},
 \]
 where the sum is over all $a$-element subset $A$ of $E$ and $b$-element subset $B$ of $E$ whose multiset union is $S$.

\begin{reptheorem}{thm:qpolynomial}
For any \Mnat-concave function $\nu:2^E \to \mathbb{R} \cup \{-\infty\}$, we have the coefficientwise inequality
  \[
  I_{q,\nu;k}(\x)^2 \succeq \Big(1+\frac1k\Big)I_{q,\nu;k-1}(\x) I_{q,\nu;k+1}(\x)
   \ \ \text{for  all $0<k<n$ and $0<q\leq 1$.}
  \]
\end{reptheorem}

\begin{proof}
Fix a positive real parameter $q \le 1$, and consider two copies of homogeneous generating polynomial  of an \Mnat-concave function $\nu$, say $Z_{q,\nu}(\x,y)$ and $Z_{q,\nu}(\x,z)$, where we use distinct homogenizing variables $y$ and $z$ for the same set of $n$ variables $\x$.
Since $Z_{q,\nu}(\x,y)$ and $Z_{q,\nu}(\x,z)$ are Lorentzian  by Theorem~\ref{thm:qLorentzian}, their product is Lorentzian too \cite[Corollary 2.32]{BH20}.
Writing $n$ for the cardinality of $E$ as before, we have
  \[
  Z_{q,\nu}(\x,y)Z_{q,\nu}(\x,z) = \sum\limits_{S} \left[\sum_{k+j=\abs{S}} N^S_{q,\nu}(j,k) y^{n-j}z^{n-k} \right] \x^S,
  \]
  where the sum is over all multisets $S$ on $E$ and $|S|$ is the cardinality of $S$ counting multiplicities. Iterating Lemma~\ref{lem:coeffLorentzian}, we see that, for each $S$, the bivariate polynomial
  \[
  \sum_{j+k=\abs{S}} N^S_{q,\nu}(j,k) y^{n-j}z^{n-k} 
  \]
  is Lorentzian.
  Therefore, the sequence of normalized coefficients $(n-j)! \ (n-k)! \ N^S_{q,\nu}(j,k)$ is log-concave and has no internal zeros \cite[Example 2.3]{BH20}. It follows that,  for all $i\leq j\leq k\leq l$ for with  $i+l=j+k=\abs{S}$, we have
  \begin{equation}\label{eq:lc4acoeff}
     (n-j)!\ (n-k)!\ N^S_{q,\nu}(j,k)  \ge (n-i)!\ (n-l)!\ N^S_{q,\nu}(i,l).
\end{equation}
  
This is not quite the inequality that we want, but we can apply it to a different  
 \Mnat-concave function $\nu'$ constructed from $\nu$ and $S$ to deduce the desired inequality
\[
j!\ k!\ N^S_{q,\nu}(j,k) \ge i!\ l!\ N^S_{q,\nu}(i,l).
\]
This will suffice, as the above displayed inequality for all $S$ is equivalent to the coefficient-wise inequality
\[
      j!\ k!\ I_{q,\nu;j}(\x)\ I_{q,\nu;k}(\x) \succeq
    i!\ l!\ I_{q,\nu;i}(\x)\ I_{q,\nu;l}(\x).
  \]

Let $S$ be a multiset on $E$ where each element of $E$ appears at most twice, and let $E'$ be the set of size
$n' := \abs{E'} = \abs{S}$
obtained from the underlying set $\underline{S}$ of $S$ by adding a second copy of each element that appears twice in $S$.
We define an \Mnat-concave function $\nu':2^{E'} \to \mathbb{R} \cup \{-\infty\}$ by setting
\[
\nu'(I)=\begin{cases} \nu(I), &\text{if $I$ is a subset of $\underline{S}$,}  \\ -\infty& \text{if otherwise.} \end{cases}
\]
By construction, for any nonnegative integers $a$ and $b$ satisfying $a+b=\abs{S}$, we have
\[
N^S_{q,\nu}(a,b)=N^S_{q,\nu'}(a,b).
\]
Applying \eqref{eq:lc4acoeff} to $\nu$ we obtain,   for all $i\leq j\leq k\leq l$ for with  $i+l=j+k=n'=\abs{S}$, that
  \[
   j!\ k!\ N^S_{q,\nu}(j,k)=(n'-j)!\ (n'-k)!\ N^{S'}_{q,\nu'}(j,k)  \ge (n'-i)!\ (n'-l)!\ N^{S'}_{q,\nu'}(i,l) = i!\ l!\ N^S_{q,\nu}(i,l)
  \]
as desired.  
\end{proof}

\small

\bibliographystyle{amsalpha}
\providecommand{\bysame}{\leavevmode\hbox to3em{\hrulefill}\thinspace}
\providecommand{\MR}{\relax\ifhmode\unskip\space\fi MR }
\providecommand{\MRhref}[2]{%
  \href{http://www.ams.org/mathscinet-getitem?mr=#1}{#2}
}
\providecommand{\href}[2]{#2}

\end{document}